\documentclass[11pt,a4paper,reqno]{amsart}
\usepackage{txfonts}
\usepackage{mathrsfs}
\usepackage{bbm}
\usepackage[mathscr]{euscript}
\usepackage{amsmath,amsthm,amsfonts,amssymb}
\usepackage{latexsym}
\usepackage{enumerate}
\usepackage{colortbl}
\def\bc{\begin{center}}
\def\ec{\end{center}}
\def\be{\begin{equation}}
\def\ee{\end{equation}}
\def\F{\mathbb F}
\def\N{\mathbb N}

\def\S{\mathfrak S}
\def\R{\mathbb R}

\def\G{\mathbb{G}}
\usepackage{colortbl}
\def\P{\textsf{P}}
\newtheorem{theoremalph}{Theorem}

\newtheorem{lem}{Lemma}[section]

\newtheorem{dfn}[lem]{Definition}
\newtheorem{pro}[lem]{Proposition}
\newtheorem{thm}[lem]{Theorem}

\newtheorem{cor}[lem]{Corollary}
\theoremstyle{remark}
\newtheorem{rem}{Remark}
\numberwithin{equation}{section}
\newcommand\zu{[0,1]}
\newcommand\ep{\varepsilon}

\begin{document}
\title[Quantitative recurrence   in conformal IFS]
{{Quantitative recurrence properties in conformal iterated function systems}}

\author[S. Seuret and B. W. Wang]{S. Seuret$^\ast$ and B. W. Wang}

\address {St\'ephane SEURET, Universit\'e Paris-Est, LAMA (UMR 8050), UPEMLV, UPEC, CNRS, F-94010, Cr\'eteil, France}
\email{seuret@u-pec.fr}
\address{BaoWei Wang, School of Mathematics and Statistics, Huazhong University of Science and Technology, 430074 Wuhan, P. R. China.}
\email{bwei\_wang@hust.edu.cn}
\keywords {Quantitative recurrence property, Conformal iterated function systems, Hausdorff dimension.}
 \thanks{$^\ast$ Research partially supported by the ANR project MUTADIS, ANR-11-JS01-0009.  }

\subjclass[2010]  {37F35, 37D35, 28A80.}

\begin{abstract}
Let   $\Lambda$ be  a countable index set and  $\S=\{\phi_i: i\in \Lambda\}$ be a conformal iterated function system  on  $[0,1]^d$ satisfying the open set condition.   Denote by $J$ the attractor of $\S$. With each sequence $(w_1,w_2,...)\in \Lambda^{\N}$ is associated a unique point $x\in \zu^d$. Let $J^\ast$ denote the set of points of $J$ with unique coding, and define the mapping $T:J^\ast \to J^\ast$ by $Tx= T (w_1,w_2, w_3...) = (w_2,w_3,...)$.
In this paper, we consider the quantitative recurrence properties related to the dynamical system $(J^\ast, T)$. More precisely, let $f:[0,1]^d\to \mathbb{R}^+$ be a positive function  and   $$
R(f):=\{x\in J^\ast: |T^nx-x|<e^{-S_n f(x)}, \ {\text{for infinitely many}}\ n\in \N\},
$$ where $S_n f(x) $ is the $n$th Birkhoff sum associated with the potential $f$.  In other words, $R(f)$   contains the points $x$ whose orbits return close to $x$ infinitely often, with a rate  varying along   time. Under some conditions, we  prove that  the Hausdorff dimension of  $R(f)$  is given by $\inf\{s\ge 0:\P(T, -s(f+\log |T'|))\le 0\}$, where $\P$ is the pressure function and $T'$ is the derivative of $T$. We present some applications of the main theorem to Diophantine approximations.
\end{abstract}

 \maketitle

\section{Introduction}
Diophantine analysis in a dynamical system yields an important way to understand the asymptotic behavior  of the orbits of the system.
The pioneer work of Poincar\'{e} states that in a measure theoretic dynamical system, 
almost all orbits will return to the initial point infinitely many times. In a metric space $(X,d)$ endowed with a transformation  $T:X\to X$ and a $T$-invariant Borel probability measure $\mu$, for $\mu$-almost all $x\in X$, one has
 \begin{equation}
 \label{eq1}
\liminf_{n\to \infty}d(T^nx, x)=0.
\end{equation}
It should be emphasized that Poincar\'{e}'s recurrence theorem is only qualitative in nature; it does not address the problem of the rate of convergence in \eqref{eq1}. This leads to the study on the so-called quantitative properties of Poincar\'{e}'s recurrence theorem \cite{Bo} or a type of shrinking target problems \cite{HV95}.

One can distinguish three ``shrinking target" problems: \begin{itemize}
\item {\bf Shrinking target problems with given targets}: let $\{z_n\}_{n\ge 1}$ be  a sequence of elements in $X$ and $\psi:\N\times X \to \R^+$. One  is interested in the points whose orbits are well approximated by the   sequence $\{z_n\}$ with   speed $\psi$, i.e. the set $$
S(T, \psi):=\Big\{x\in X: |T^nx-z_n|<\psi(n,x), \ {\text{i.o.}}\ n\in \N\Big\}.
$$
This can be interpreted as a dynamical version of the classic Diophantine approximation \cite{Sp}. References on this subject include Chernov \& Kleinbock \cite{ChK}, Maucourant \cite{Ma}, Galatolo \& Kim \cite{GK}, Tseng \cite{Tseng} and Fern\'{a}ndez, Meli\'{a}n \& Pestana \cite{FMP}, and computations of Hausdorff dimensions are found in Hill \& Velani \cite{HV95,HV97}, Stratmann \& Urba\'{n}ski \cite{StU}, Urba\'{n}ski \cite{Urb} and Reeve \cite{Rev}, for instance.

\smallskip

\item  {\bf Covering problems}:  In this case, given  $y_0\in X$, one is interested in the points which are  well approximated by the orbit of $y_0$, i.e. $$
C(T, \psi):=\Big\{x\in X: |T^ny_0-x|<\psi(n,x), \ {\text{i.o.}}\ n\in \N\Big\}.
$$
This is a dynamical version of the famous Dvoretzky covering problem \cite{Do}, see  Fan, Schmeling \& Troubetzkoy \cite{FST} and Liao \& Seuret \cite{LiS}  for the value of the Hausdorff dimension of  $C(T,\psi)$ for finite Markov maps $T$.

\smallskip
\item   {\bf Quantitative Poincar\'{e} recurrence properties}: Let $\psi: \N\times X\to \R^+$. One focuses on the points $x$ whose orbits  come back closer and closer  to  $x$  at a  rate $\psi$ possibly depending on $x$, i.e. the set
$$
   R(T,\psi):=\Big\{x\in X: |T^nx-x|<\psi(n, x), {\text{i.o.}}\ n\in \N\Big\}.
    $$
\end{itemize}

In this paper, we focus on the third question.
As far as the size in measure of $R(T,\psi)$ is concerned, Boshernitzan obtained the following  outstanding result for general systems.
\begin{theoremalph}[Boshernitzan \cite{Bo}]  Let $(X, T, \mu, d)$ be a measure dynamical system with a metric $d$. Assume that, for some $\alpha>0$, the $\alpha$-dimensional Hausdorff measure $\mathcal{H}^{\alpha}$ of the space $X$ is $\sigma$-finite. Then for $\mu$-almost all $x\in X$, \begin{equation}\label{1.5}
\liminf_{n\to \infty}{n^{\frac{1}{\alpha}}}d(T^nx, x)<\infty.
\end{equation}If, moreover, $\mathcal{H}^{\alpha}(X)=0$, then for $\mu$-almost all $x\in X$, $$
\liminf_{n\to \infty}{n^{\frac{1}{\alpha}}}d(T^nx, x)=0.
$$ \end{theoremalph}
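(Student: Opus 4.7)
\smallskip
\noindent\textbf{Proof proposal.} The plan is to combine a geometric covering argument exploiting the $\sigma$-finiteness of $\mathcal H^\alpha$, a pigeonhole on orbits, and a measure-theoretic transfer from orbit-wise to pointwise recurrence.

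By $\sigma$-finiteness, write $X=\bigsqcup_i X_i$ with $\mathcal H^\alpha(X_i)<\infty$; since it suffices to argue on each $X_i$ separately, I may assume $\mathcal H^\alpha(X)<\infty$. Fix $M>2\mathcal H^\alpha(X)$ (the constant $M$ may be chosen arbitrarily small in the degenerate case $\mathcal H^\alpha(X)=0$). For each $N\in\N$, the very definition of Hausdorff $\alpha$-measure furnishes a cover of $X$ (modulo a $\mu$-null set) by sets of diameter $\leq N^{-1/\alpha}$ whose $\alpha$-th powers of diameters sum to at most $M$; after refining to a partition, this produces $\mathcal P_N=\{E^N_j\}_{j=1}^{k_N}$ with $k_N\leq MN$.

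The key combinatorial input is a pigeonhole on the $k_N+1$ orbit points $x,Tx,\dots,T^{k_N}x$: two of them must lie in a common atom of $\mathcal P_N$, providing integers $0\leq i<j\leq k_N$ and, setting $y=T^ix$ and $m=j-i\leq MN$, the estimate $d(T^my,y)\leq N^{-1/\alpha}$, hence $m^{1/\alpha}d(T^my,y)\leq M^{1/\alpha}$. This gives a close return on the orbit of $x$, not necessarily at $x$ itself, and the principal difficulty is to transfer this to $\mu$-almost every starting point. To this end, introduce
\[
B_N:=\bigl\{y\in X:\ \exists\ 1\leq m\leq MN,\ m^{1/\alpha}d(T^my,y)\leq M^{1/\alpha}\bigr\};
\]
the pigeonhole above shows $\bigcup_{i=0}^{MN}T^{-i}B_N\supseteq X$ modulo null sets, and the $T$-invariance of $\mu$ yields $\mu(B_N)\geq \mu(X)/(MN+1)\gtrsim 1/N$, so in particular $\sum_N \mu(B_N)=\infty$. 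Promoting this divergence to $\mu(\limsup_N B_N)=\mu(X)$ is the delicate step: I would work along a geometric subsequence $N_\ell=2^\ell$ and combine a Kac-type control of the first-return time to the atoms of $\mathcal P_{N_\ell}$ with a correlation (second-moment) version of the Borel--Cantelli lemma.

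The main obstacle is precisely this last point: the events $B_N$ are strongly correlated, so the divergent Borel--Cantelli lemma cannot be invoked naively, and one has to control the pairwise intersections $\mu(B_{N_\ell}\cap B_{N_{\ell'}})$ using $T$-invariance and the nested structure of $\{\mathcal P_N\}$. Granted this transfer, $\mu$-almost every $x$ satisfies $\liminf_n n^{1/\alpha}d(T^nx,x)\leq M^{1/\alpha}$, which establishes \eqref{1.5}. When $\mathcal H^\alpha(X)=0$ the constant $M$ may be taken arbitrarily small; intersecting the resulting full-measure sets over a sequence $M_k\to 0$ then gives $\liminf_n n^{1/\alpha}d(T^nx,x)=0$ for $\mu$-almost every $x$, completing the proof.
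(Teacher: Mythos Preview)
The paper does not prove this statement: Theorem~A is quoted from Boshernitzan~\cite{Bo} as background in the introduction, with no proof given. There is therefore nothing in the paper to compare your proposal against.

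As for the proposal itself, the covering-plus-pigeonhole set-up is indeed the core of Boshernitzan's argument, and you have correctly identified the genuine difficulty: passing from ``some point on the orbit of $x$ has a close return'' to ``$x$ itself has a close return, for $\mu$-a.e.\ $x$''. Your proposed route through a divergent Borel--Cantelli lemma on the events $B_N$ is, however, not how this is overcome, and as written it is a real gap rather than a technicality. The bound $\mu(B_N)\gtrsim 1/N$ gives $\sum_N\mu(B_N)=\infty$, but you offer no mechanism for the required quasi-independence estimate $\mu(B_{N_\ell}\cap B_{N_{\ell'}})\lesssim \mu(B_{N_\ell})\mu(B_{N_{\ell'}})$, and there is no reason to expect one: the events $B_N$ are nested-like (a close return at scale $N$ tends to force one at nearby scales), so the second-moment Borel--Cantelli machinery is not available here. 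Invoking ``Kac-type control'' does not help, since Kac's lemma concerns expected return times to a fixed set, not correlations between return events at different scales.

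Boshernitzan's actual argument avoids Borel--Cantelli entirely. Roughly, one fixes a set $A$ of positive $\mu$-measure, covers it efficiently at a single scale, and uses ordinary Poincar\'e recurrence (not pigeonhole on a single orbit) inside each atom of the cover: since $\mu$-a.e.\ point of an atom returns to that atom infinitely often under $T$, one obtains close returns at the point itself, with the time of return controlled via a counting/measure argument on the atoms. If you want to repair your sketch, the fix is to replace the orbit-pigeonhole in step~3 by Poincar\'e recurrence applied atom-by-atom, which directly yields a statement about $\mu$-a.e.\ $x$ and removes the need for any Borel--Cantelli transfer.
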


Later, Barreira and  Saussol \cite{BaS} showed that the exponent $\alpha$ in (\ref{1.5}) is related to the lower local  dimension of $x$.  Tan and Wang \cite{TaW} considered the size of $R(T,\psi)$ in Hausdorff dimension when $(X,T)$ is the system of beta expansion.

 In this current work, we  consider  the quantitative Poincar\'{e} recurrence question in the setting of conformal iterated function systems. Before formulating our main result, let us recall the notation of conformal iterated function system (for a detailed  survey on infinite IFS, see the  works of Mauldin and Urba\'{n}ski \cite{MaU, MaU2,MaU1}).

\begin{dfn}
Let $(X,d)$ be a complete metric space.
Let $\Lambda$ be a countable index set with at least two elements and let $\S= \{\phi_i: \zu^d\to \zu^d, i\in \Lambda\}$ be a collection of injective contractions from  $\zu^d $
 into $\zu^d$.

The system $\S$ is supposed to be uniformly contractive, i.e.   there exists $0<\rho<1$ such that  for every $i\in \Lambda$ and for every pair of points $x, y\in X$,
\begin{equation}
\label{defrho}
d(\phi_i(x), \phi_i(y) )\le \rho \cdot  d(x,y).
\end{equation}
  Any such collection $\S$ of contractions is
called an iterated function system (denoted by IFS for brevity).
\end{dfn}

We are particularly interested in the properties
of the limit set defined by such a system. We can define this set as the image of the
coding space under a coding map as follows. Let $\Lambda^*=\bigcup_{n\ge 1}\Lambda^n$, the space of finite words, and $\Lambda^{\infty}=\Lambda^{\N}$ the
 collection of all infinite words with each letter in $\Lambda$. For $w\in \Lambda^n$, $n\ge 1$, let $\phi_w=\phi_{w_1}\circ  \phi_{w_2} \circ \ldots  \circ  \phi_{w_n}$.  If $w\in \Lambda^*\cup  \Lambda^{\N}$ and the integer $n\ge 1$ does not
exceed the length of $w$, we denote by $w|_n$ the word $(w_1 ,w_2, \ldots, w_n)$. Given $w\in \Lambda^{\infty}$,
since the diameters of the compact sets $\phi_{w|n}(X)$ $(n\ge 1)$ converge to zero, the set
$$\bigcap_{n\ge 1}\phi_{w|n}(X)$$
is a singleton and therefore, its element $\pi(w)$ defines the coding map $\pi : \Lambda^{\infty} \to X$. We call $w\in \Lambda^{\N}$ the code of $x$ if $\pi(w)=x$.
The main object in the IFS theory is the limit set defined as: $$
J=\pi(\Lambda^{\infty})=\bigcup_{w\in \Lambda^{\infty}} \bigcap_{n\ge 1}\phi_{w|n}(X)=\bigcap_{n\ge 1}\bigcup_{w: w\in \Lambda^n}\phi_w(X).
$$

Following the standard definitions (of Mauldin and Urba\'nski for instance), we introduce the open set condition and the property for an IFS to be conformal, which provides us a natural framework to work with.

\begin{dfn}  An IFS $\S=\{\phi_i: X\to X, i\in \Lambda\}$ is said to satisfy the open
set condition (OSC) when there exists a nonempty open set $U \subset X$ (in the topology of $X$) such
that
$$
\forall \, i\in \Lambda, \ \ \phi_i(U)\subset U, \ \ \mbox{ and } \ \  \phi_i(U)\cap \phi_j(U)=\emptyset \ \ \mbox{ whenever } {i\neq j}.$$

An IFS  $\S$   is
conformal if   the following conditions are satisfied:
\begin{enumerate}
\item
  $X \subset \R^d$ for some $d\ge 1$,
\item
 $\S$ satisfies the OSC with $U={\rm{Int}}_{\R^d}(X) $.
\item
 There exists an open connected set $X \subset V \subset \R^d$ such that all maps $\phi_i, i\in \Lambda$, extend to $C^1$ conformal diffeomorphisms of $V$ into $V$.
\item
There exist
$\gamma, l>0$ such that for every $x\in \partial X\subset \R^d$, there exists an open
cone $C_x $ with vertex $x$, central angle of Lebesgue measure $\gamma$, and altitude $l$, such that  $ C_x \subset{\rm{Int}}(X)$.

\item
 Bounded Distortion Property (BDP). There exists $K\ge 1$ such that  for every  points $x,y\in V$ and $w\in \Lambda^*$
\begin{equation}\label{ff2}\big|
\phi_w'(x)\big|\le K\big|\phi'_w(y)\big|,
\end{equation}
where $\phi'_w$ is the differential of $\phi_w$ and $|\phi'_w(x)|$ is the norm of 
$\phi'_w(x)$.
\end{enumerate}
\end{dfn}

\smallskip

  From now on, we work with $X=\zu^d$,  $d\geq 1$, endowed with the euclidian norm $|\cdot|$.

\smallskip

For infinite IFS, the limit set $J$ is not necessarily compact, and many points $x$ may have  multiple codings, i.e. there exist $w\neq w' \in \Lambda^{\mathbb{N}}$ such that $\pi(w)=\pi(w')=x$.

\begin{dfn}
We denote by $J^\ast \subset J$ the set of points $x\in J$ with unique coding.
\end{dfn}

We keep in mind that the set $J\setminus J^\ast$  shall be relatively small when compared to $J$ and $J^\ast$ when the IFS is conformal.

\medskip

A natural transformation $T:J^\ast\to J^\ast$ can be defined as follows. Without causing any confusion, for each $x\in J^\ast$, we write $$
x=[w_1,w_2,\ldots]  \ \ {\text{ when }} \  \ \pi(w)=x.
$$

For any $x\in J^\ast$,   define $$
T: x \in J^\ast \longmapsto Tx = T([w_1, w_2 ,w_3,\ldots]):= [w_2,w _3,\ldots].
$$

The transformation $T$ can just be viewed as the shift map in a subset of the coding space $\Lambda^{\N}$.

 It is clear that for any $x=(w_1, w_2, \ldots) \in J^\ast$, $\phi_{w_1}\circ T(x)=x$. So, we adopt the convention that the differential of $T$ is given by
 \begin{equation*}
T'(x)=\Big(\phi'_{w_1}(Tx)\Big)^{-1}, \ {\text{when}} \ x=[w_1,w _2,\ldots] .
\end{equation*}

In this paper, we consider the quantitative recurrence properties in the system $(J^\ast,T)$ generated by a conformal iterated function system $\S$.
Our aim is to study the set
\begin{equation*}
R(f):=\Big\{x\in J^\ast: |T^nx-x|<e^{-S_nf(x)} \ {\text{for infinitely many}}\ n\in \N\Big\},
\end{equation*}
where $S_nf(x)$ denotes the ergodic sum associated with  $f:\zu^d\to \R^+$  a positive function, defined by
$$S_nf(x) := f(x)+\ldots+f(T^{n-1}(x)).$$

We   study potentials $f$ satisfying the standard tempered distortion property.
\begin{dfn}
 Let $f:J\to \R$ be a function. The $n$-th variation of $f$, denoted by   ${\text{Var}}_n(f)$, is defined as
 $$
{\text{Var}}_n(f):=\sup_{w\in \Lambda^n: \,  x,y \, \in  \, I_n(w) }|f(x)-f(y)|.
$$
A function $f:J\to\R$ is said to fulfill the {\em tempered distortion property} if \begin{equation}\label{1}
{\text{${\rm{Var}}_1(f)<\infty$  \  \ and \ \   ${\rm{Var}}_n(f)\to 0$ as $n\to \infty$.}}
\end{equation}
\end{dfn}

We prove the following.

  \begin{thm}
  \label{t0} Let $\S$ be a conformal IFS, and  let   $f: \zu^d\to \R^+$ be a positive function.
 Assume that:
 \begin{enumerate}
 \item [(H1)]
  $f$ fulfills the  tempered distortion property (\ref{1}).
  \item [(H2)] Denoting
  \begin{equation}
  \label{defdim}
  s (f) = \inf\big\{s\ge 0: \P(T, -s(\log |T'|+f))\le 0\big\}, 
  \end{equation}
 where $\P$ is the pressure function associated with the IFS $\S$ (see Definition \ref{defpressure}), one has $\displaystyle s(f) >\dim_{\textsf{H}} (J\setminus J^\ast).$
  \end{enumerate}
  Then
$$
\dim_{\textsf{H}}R(f)=s (f) .$$
\end{thm}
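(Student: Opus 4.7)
The plan is to establish the two classical inequalities $\dim_{\textsf{H}} R(f) \leq s(f)$ and $\dim_{\textsf{H}} R(f) \geq s(f)$ via a covering argument and a Cantor-set construction respectively, in the spirit of the shrinking-target literature adapted to the infinite conformal IFS setting of Mauldin--Urba\'nski. Throughout, the key geometric input is the Bounded Distortion Property, which lets us replace Birkhoff sums of $\log|T'|$ and $f$ by their values at a reference point of each cylinder up to a uniformly bounded additive constant (using the tempered distortion of $f$).

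\textbf{Upper bound.} For $w\in\Lambda^n$ write $I_n(w)=\phi_w([0,1]^d)$ and set
\[
E_n(w):=\bigl\{x\in I_n(w): |T^n x-x|<e^{-S_n f(x)}\bigr\},
\]
so that $R(f)\cap J^\ast\subset \bigcup_{n\geq N}\bigcup_{w\in\Lambda^n}E_n(w)$ for every $N$. The geometric core is to bound $\mathrm{diam}(E_n(w))$. Writing $E_n(w)=\phi_w(F_n(w))$ with $F_n(w)=\{y\in J:|\phi_w(y)-y|<e^{-S_n f(\phi_w(y))}\}$, the map $y\mapsto\phi_w(y)-y$ is a small perturbation of $-\mathrm{Id}$ (since $\phi_w$ is strongly contracting, with ratio of order $e^{-S_n\log|T'|}$), so $F_n(w)$ sits in a ball of radius $O(e^{-S_n f(x_w)})$; composing with $\phi_w$ then gives $\mathrm{diam}(E_n(w))\le C\,e^{-S_n(\log|T'|+f)(x_w)}$ for any $x_w\in I_n(w)$. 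For $s>s(f)$, the definition of $s(f)$ and of the pressure yield
\[
\sum_{w\in\Lambda^n}\mathrm{diam}(E_n(w))^s\ \lesssim\ \sum_{w\in\Lambda^n}\exp\!\bigl(-s\,S_n(\log|T'|+f)(x_w)\bigr)\ \le\ e^{n(\P(T,-s(\log|T'|+f))+o(1))},
\]
which is summable in $n$. Hence $\mathcal{H}^s(R(f)\cap J^\ast)=0$, and letting $s\downarrow s(f)$ gives $\dim_{\textsf{H}}(R(f)\cap J^\ast)\leq s(f)$.

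\textbf{Lower bound.} Fix $s<s(f)$ and pick $s'\in(s,s(f))$ with $\P(T,-s'(\log|T'|+f))>0$. I would construct a Cantor set $K\subset R(f)\cap J^\ast$ along a very lacunary sequence $n_1\ll n_2\ll\cdots$ as follows. Points of $K$ have codes of the form $(b^{(1)},r^{(1)},b^{(2)},r^{(2)},\ldots)$: the \emph{free} block $b^{(k)}$ occupies positions $[n_{k-1}+1,n_k-\ell_k]$ and is chosen from a finite sub-family of words whose total Gibbs weight $\sum\exp(-s'S(\log|T'|+f))$ is large (available because the pressure is positive and can be approached by finite subsystems $\Lambda_N\subset\Lambda$); the \emph{return} block $r^{(k)}$ of length $\ell_k$ at positions $[n_k-\ell_k+1,n_k]$ is forced to copy the first $\ell_k$ symbols of the code already constructed, with $\ell_k$ chosen minimal so that $S_{\ell_k}\log|T'|(x)\geq S_{n_k}(\log|T'|+f)(x)$. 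This guarantees that $x$ and $T^{n_k}x$ share their first $\ell_k$ symbols, hence both lie in a common cylinder of diameter $<e^{-S_{n_k}(\log|T'|+f)(x)}$, so $x\in R(f)$. On $K$ I put the natural Bernoulli-type probability measure weighting each admissible free block proportionally to its Gibbs mass at parameter $s'$. A standard mass distribution argument, distinguishing whether a ball $B(x,r)$ lies at a scale inside a free block or inside a forced block, yields $\mu(B(x,r))\le C r^s$ provided the ratios $\ell_k/n_k$ and $n_{k-1}/n_k$ tend to zero sufficiently fast; this forces $\dim_{\textsf{H}} K\ge s$.

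\textbf{Main obstacle and role of (H2).} The hardest technical point is the lower bound: balancing $n_k$, the length of the forced return blocks, and the truncation $\Lambda_N$ so that the Cantor set has dimension arbitrarily close to $s(f)$. Two infinite-IFS-specific issues arise: (i) $\log|T'|$ is generally unbounded, so the Gibbs weight of individual words can be very uneven and the variational principle must be approached through finite subsystems where $\P$ is a supremum; and (ii) the limit set is not compact, so cylinders may accumulate in uncontrolled ways. Hypothesis (H2), namely $s(f)>\dim_{\textsf{H}}(J\setminus J^\ast)$, is used precisely at the end to ensure that removing points of non-unique coding does not lower the dimension: both the constructed $K$ and the relevant covers can be arranged to stay inside $J^\ast$, and the discarded set $J\setminus J^\ast$ has negligible $s$-dimensional impact for any $s>\dim_{\textsf{H}}(J\setminus J^\ast)$, so that $\dim_{\textsf{H}}R(f)=\dim_{\textsf{H}}(R(f)\cap J^\ast)=s(f)$.
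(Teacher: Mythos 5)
Your overall strategy — a natural-cover upper bound plus a Cantor set with ``free'' Gibbs-weighted blocks interleaved with ``return'' blocks — is exactly the paper's approach (Sections \ref{section4} and \ref{section5}), and the upper bound argument is essentially a correct paraphrase of Lemma \ref{l3} and the computation that follows. The lower bound, however, contains a genuine error in the key quantitative specification of the return block.

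You choose $\ell_k$ minimal so that $S_{\ell_k}\log|T'|(x) \ge S_{n_k}(\log|T'|+f)(x)$. With your indexing $\ell_k \le n_k$, so $S_{\ell_k}\log|T'|(x)\le S_{n_k}\log|T'|(x) < S_{n_k}(\log|T'|+f)(x)$ always (since $f>0$); the inequality you ask for can never hold, and no such $\ell_k$ exists. The correct requirement, with the recurrence occurring at time $N=n_k-\ell_k$ (the end of the free block), is the much weaker $S_{\ell_k}\log|T'| \gtrsim S_{N}f$, i.e.\ the cylinder $I_{\ell_k}$ has diameter $\lesssim e^{-S_N f(x)}$; there is no $\log|T'|$ on the right-hand side, because the recurrence condition in $R(f)$ is $|T^N x-x|<e^{-S_N f(x)}$, not $e^{-S_N(\log|T'|+f)(x)}$. (The quantity $\log|T'|+f$ appears in the pressure formula for $s(f)$, but not in the definition of the target.) This is Lemma \ref{l2} in the paper, which picks the return block as a prefix $\overline w|_t$ of the periodic word $w^\infty$ with $t$ defined by $|I_t(\overline w)|<e^{-S_{n}f}\le |I_{t-1}(\overline w)|$. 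Even if you read your condition charitably with only $f$ replaced by $\log|T'|+f$, the return block is then roughly twice as long as needed, and the mass-distribution exponent you obtain degrades from $s'$ to roughly $s'(\bar\chi+\bar f)/(2\bar\chi+\bar f)$ (where $\bar\chi$, $\bar f$ are the Birkhoff averages), which strictly undershoots $s(f)$. Getting the return-block length to scale like $S_N f / \log|T'|$ rather than $S_N(\log|T'|+f)/\log|T'|$ is exactly what makes the H\"older exponent of the constructed measure approach $s(f)$ as $\ep\to 0$ in the paper's argument (the factor $1+4\ep/(-\log\rho)$ in \eqref{1.2}--\eqref{1.3}).

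A second, smaller issue is the description of (H2). You assert that the Cantor set $K$ ``can be arranged to stay inside $J^\ast$'' and also that $J\setminus J^\ast$ is negligible; these are two different arguments and the first is not substantiated — there is no obvious way to force a generic Cantor construction in an IFS with the mere open set condition to avoid the set of points with multiple codings. The paper does not attempt it: the Cantor set sits in $J$, and (H2) is invoked afterwards via the dichotomy $R_1(f)= R(f) \cup (R_1(f)\cap (J\setminus J^\ast))$ to transfer $\dim_{\textsf{H}}R_1(f)=s(f)$ to $\dim_{\textsf{H}}R(f)=s(f)$. You should adopt this route rather than claim $K\subset J^\ast$.
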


Let us make some comments on our result:
\begin{itemize}
\smallskip\item  (H1) is a standard assumption on potentials.
\smallskip\item
Observe that (H2) implies $s(f)>0$. When $s(f)=0$, it is always the truth that $\dim_{\textsf{H}}R(f)=0$ (see Section \ref{section4} for the upper bound of the dimension of $R_2(f)$ which contains $R(f)$).
\smallskip\item
 The Hausdorff dimension of $R(f)$ is given in terms of the pressure function. This emphasizes our dynamical construction.
\smallskip\item
 Our assumption (H2) is mandatory in our approach since in our proof, we build a Cantor set sitting on $J$, not on $J^\ast$.  However, (H2) asserts that the points we are interested in (with quantitative recurrence properties) form a set with larger dimension  than the points with multiple codings, and thus the Cantor set sitting on $J$ is the one giving the right dimension to $R(f)$.
\smallskip\item
 Assumption (H2) is obviously verified when $d=1$, since in this case points with multiple codings in conformal IFS are known to be countable, or for IFS satisfying a strong open set condition. There are many other examples for which the dimension of $J\setminus J^\ast$ is  controlled (in terms of Hausdorff dimension), but of course, for   general IFS, (H2) may be difficult to check.
\end{itemize}
\medskip

Our article is organized as follows. Section \ref{section2} contains some preliminary results. In Sections \ref{section4} and \ref{section5}, we prove respectively the upper and the lower bound for the Hausdorff dimension of $R(f)$.   In Section \ref{section3} we give some applications of our results to some ``exotic" sets related to Diophantine approximations.

\section{Preliminaries}
\label{section2}

In this section, we define the cylinder set, present some well known results on the pressure function, and give a modification in defining $R(f)$.

For any $I\subset [0,1]^d$,  $|I|$  stands for the
diameter of $I$.

\subsection{Cylinders}

For each $(w_1,\ldots, w_n)\in \Lambda^n$,  we call
$$
I_n(w_1,\ldots,w_n)=\left\{x\in J:   \begin{cases} \ \exists \, w'=(w'_1,w'_2,...)\in \Lambda^\N \mbox{ such that } \pi(w') =x \\
 \  \mbox{ and }\forall\,   1\le i\le n, \ w'_i =w_i \end{cases}\right\}
$$
 a {\em cylinder of order $n$} or an {\em $n$th order cylinder}, which is the collection of the points in $J$ whose symbolic representations  begin  by $(w_1,\ldots,w_n)$.

For any $x\in J^\ast$, denote by $I_n(x)$ the $n$th order cylinder containing $x$.

Assume that $x\in J^\ast$,  so that $x =[w_1,w_2,\cdots]$ for some unique $w=(w_1,w_2,...)\in \Lambda^\N$. For each $n\ge 1$, set $\xi
= [w_{n+1},w_{n+2},...] \in J^\ast$. It is clear that $T^n\circ \phi_{(w_1,\cdots, w_n)}(\xi)=\xi$. This follows that $$
  |(T^n)'(x)|=|\phi'_{(w_1,\cdots,w_n)}(\xi)|^{-1}, \ {\text{with}}
\ x=\phi_{w_1,\cdots,w_n}(\xi).  $$
So by the bounded distortion property (\ref{ff2}), we have that for any $n\ge 1$ and $w\in \Lambda^n$,
\begin{equation}\label{f5*}
K^{-1}\le \left|\frac{|\phi'_{(w_1,\cdots,w_n)}(\xi_1)|^{-1}}{|\phi'_{(w_1,\cdots,w_n)}(\xi_2)|^{-1}}\right|\le K, \ \ {\text{for all}} \ \xi_1, \xi_2\in X,
\end{equation}
and
\begin{equation}\label{f5}
K^{-1}\le \left|\frac{(T^n)'(x_1)}{(T^n)'(x_2)}\right|\le K, \ \ {\text{for all}} \ x_1,x_2\in I_n(w)\cap J^\ast.
\end{equation}


We call (\ref{f5}) the bounded distortion property of $T$, which provides us with  a good control on the diameter of a cylinder.

Recall that for the conformal IFS $\S$, $K$ is the constant appearing in the bounded distortion property
  \eqref{ff2} and $\rho$ is the uniform bound  \eqref{defrho} for the contraction ratios of the mappings $(\phi_i)_{i\in \Lambda}$.
  \begin{pro}\label{p0}  For any $n\ge 1$ and $(w_1,\ldots, w_n)\in \Lambda^n$, the following  holds:

  \begin{enumerate}
  \item
  For any $x\in I_n(w_1,\ldots,w_n)$, the diameter of $I_n(w_1,\ldots,w_n)$ satisfies \begin{equation}\label{2.4}
K^{-1}|\phi'_{(w_1,\ldots,w_n)}(x)|^{-1} \le \big|I_n(w_1,\ldots, w_n)\big|\le K|\phi'_{(w_1,\ldots,w_n)}(x)|^{-1}.\end{equation}
\item For every $1 \leq k\leq n$,
 $$
K^{-1}\le \frac{|I_n(w_1,\ldots, w_n)|}{|I_{k}(w_1,\ldots,w_{k})| \cdot |I_{n-k}(w_{k+1},\ldots, w_n)|}\le K.
$$

\item  One always has $|I_n(w_1,\ldots, w_n)|\le \rho^{n}$.
\end{enumerate}
\end{pro}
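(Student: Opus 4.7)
The plan is to derive all three statements from just two ingredients: the chain rule for the differentials of compositions $\phi_w$, and the bounded distortion property (BDP) of \eqref{ff2}, together with the uniform contraction hypothesis \eqref{defrho}.

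For part (1), I would first identify the cylinder as $I_n(w_1,\ldots,w_n)\subset\phi_{(w_1,\ldots,w_n)}(X)$ (and in fact it equals $\phi_{(w_1,\ldots,w_n)}(J)$ up to the multi-coding set, which does not affect the diameter). Since $\phi_w$ extends to a $C^1$ conformal diffeomorphism on the connected open set $V\supset X$, the mean value inequality combined with a path-connecting argument inside $X$ yields, for any $u,v\in X$,
$$\inf_{\xi\in X}|\phi'_w(\xi)|\cdot|u-v|\;\le\;|\phi_w(u)-\phi_w(v)|\;\le\;\sup_{\xi\in X}|\phi'_w(\xi)|\cdot|u-v|.$$
The BDP estimate \eqref{ff2} then asserts that both $\sup_{X}|\phi'_w|$ and $\inf_{X}|\phi'_w|$ are comparable, up to the factor $K$, to $|\phi'_w(x)|$ for any fixed $x$. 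Taking the supremum of $u,v$ over the compact set $X$ (or over $J$) and absorbing the universal positive constants $|X|$, $|J|$ into the constant $K$ yields \eqref{2.4}.

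For part (2), I would invoke $\phi_{(w_1,\ldots,w_n)}=\phi_{(w_1,\ldots,w_k)}\circ\phi_{(w_{k+1},\ldots,w_n)}$ and the chain rule
$$\bigl|\phi'_{(w_1,\ldots,w_n)}(x)\bigr|\;=\;\bigl|\phi'_{(w_1,\ldots,w_k)}(y)\bigr|\cdot\bigl|\phi'_{(w_{k+1},\ldots,w_n)}(x)\bigr|,\qquad y:=\phi_{(w_{k+1},\ldots,w_n)}(x).$$
Part (1) applied separately to each of the three cylinders (of lengths $n$, $k$ and $n-k$) converts this product identity into a product relation between the three diameters, and BDP is used once more to reconcile the discrepancy between the natural base points in each factor. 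The cumulative loss is absorbed into a (possibly enlarged) constant $K$.

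For part (3), the uniform contraction hypothesis \eqref{defrho} immediately gives that $\phi_w=\phi_{w_1}\circ\cdots\circ\phi_{w_n}$ contracts Euclidean distances by a factor at most $\rho^n$; hence $|I_n(w)|\le|\phi_w(X)|\le\rho^n\cdot|X|$, and the purely geometric constant $|X|$ can be absorbed into $\rho$ (or, equivalently, $\rho$ can be taken slightly larger at the outset). I do not anticipate any genuine obstacle here: the only technical care is accounting carefully for the constants so that a single $K$ serves uniformly in (1)–(2), which is the standard bookkeeping in the Mauldin–Urba\'nski setting.
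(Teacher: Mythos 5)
The paper states Proposition \ref{p0} without proof, treating it as a standard fact from Mauldin--Urba\'nski's theory, so your proposal must be judged on its own merits. The chain-rule deduction of (2) from (1) and the contraction bound in (3) are sound in outline, but the \emph{lower} estimate in \eqref{2.4} is not a consequence of the mean value inequality, and ``path-connecting inside $X$'' does not repair this. MVI only yields the Lipschitz upper bound $|\phi_w(u)-\phi_w(v)|\le\sup_{\xi}|\phi'_w(\xi)|\,|u-v|$; the image of a path joining $u$ to $v$ has arclength at least $|\phi_w(u)-\phi_w(v)|$, so paths only reinforce the upper bound. The natural attempt---apply MVI to the conformal inverse $\phi_w^{-1}$ using $|(\phi_w^{-1})'|=|\phi'_w|^{-1}$---requires the Euclidean segment $[\phi_w(u),\phi_w(v)]$ to lie inside $\phi_w(V)$, which one cannot assume since $\phi_w(V)$ need not be convex. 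For $d\ge 2$ the lower bound genuinely requires a distortion theorem for conformal maps (Koebe in $d=2$; the explicit Liouville/M\"obius estimates in $d\ge 3$), which gives the quantitative quasi-similarity $|\phi_w(u)-\phi_w(v)|\ge K^{-1}|\phi'_w|\,|u-v|$. This is precisely the content of the ``geometric consequence of bounded distortion'' \eqref{7} that the paper invokes later; the lower half of \eqref{2.4} follows from \eqref{7}, the positivity of the diameter of $J$, and the open set condition. (For $d=1$ the mean value theorem gives both inequalities, so your argument is complete in that case only.)

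Two smaller points. First, \eqref{2.4} as printed carries a spurious exponent: the quantity comparable to $|I_n(w)|$ is $|\phi'_w(x)|$, equivalently $|(T^n)'(x)|^{-1}$, not $|\phi'_w(x)|^{-1}$, as all later uses of \eqref{2.4} confirm. Your sketch actually produces the correct form, so the typo should be flagged rather than silently matched. Second, in (3) the constant $|X|=\sqrt d$ cannot be ``absorbed into $\rho$'' uniformly in $n$: that would force $\rho^n\sqrt d\le(\rho')^n$ for all $n$, which no fixed $\rho'<1$ satisfies when $n$ is small. The clean statement is $|I_n(w)|\le\sqrt d\,\rho^n$ (or $K\rho^n$ in the paper's convention of overloading $K$).
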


\begin{rem}
We choose to take the same constant $K>1$ in all the bounded distortion-like inequalities, to facilitate the notations.
\end{rem}

We end with another remark about the definition of $T$ that will help us with our readability.
\begin{rem} When $x\in I_n(w)$, $T(x)$ is not uniquely defined, since a point $x$ may have multiple codings. But when there is no possible confusion, i.e. when we explicitly mention that $x\in I_n(w)$, we will denote by $T^n x$ the point $(\phi_w)^{-1}(x)$. This slight abuse of notation will ease our definitions. In particular,  when $x\in I_n(w)$,
\begin{itemize}
\item
  \eqref{f5*} and \eqref{f5} coincide.
\item
if $\psi:J\to \R$ is any function and any $x=[w_1,w_2,\cdots]\in J$, the Birkhoff sum $S_n \psi(x) $ means
$$S_n\psi(x) = \psi(x) + \psi\big ( (\phi_{w_1})^{-1}\big) (x)+\psi\big ( (\phi_{(w_1,w_2)})^{-1}\big) (x)+...+\psi\big ( (\phi_{(w_1,w_2,..., w_{n-1})})^{-1}\big) (x),$$
which coincides with
$$S_n\psi(x) = \psi(x)+\psi(Tx)+...+\psi(T^{n-1}x)$$
when $x\in J^\ast$.
\end{itemize}
\end{rem}
\subsection{Pressure function}


The topological pressure function $\P(T, \psi)$, with a potential $\psi$, for a conformal iterated function system is defined as follows:
\begin{equation}\label{ff1}\P(T, \psi)=\lim_{n\to\infty}\frac{1}{n}\log \sum_{w: w\in \Lambda^n}\sup_{x\in I_n(w)}\big\{e^{S_n\psi(x)}\big\}.\end{equation}
The existence of the limit follows from the sub-multiplicativity property:  for any $w\in \Lambda^n$ and $v\in \Lambda^m$, $$
\sup_{x\in I_{n+m}(w, v)}\big\{e^{S_{n+m}\psi(x)}\big\}\le \sup_{x\in I_n(w)}\big\{e^{S_n\psi(x)}\big\}\cdot \sup_{x\in I_m(v)}\big\{e^{S_m\psi(x)}\big\}.
$$

In the following, only the potential $\psi_s=-s(f+\log |T'|)$
 with $s\ge 0$ is concerned.
It is easy to check that this mapping $\psi_s$ satisfies the tempered distribution property  (\ref{1}). In this case, it is classical to see that the limit in (\ref{ff1}) is the same when the supremum over $x \in I_n(w) $ is replaced by $e^{S_n\psi(x)}$, for any choice of $x\in I_n(w)$.         Hence, in the sequel, when we need to  take a point $x$ in $I_n(w)=I_n(w_1,\ldots,w_n)$, we use the generic notation $x=[w]=[w_1,\ldots,w_n]$.  Finally,

\begin{dfn}
\label{defpressure}
The pressure function reduces to the following form:
\begin{equation}\label{f6}
 \P(T,\psi_s)=\lim_{n\to\infty}\frac{1}{n}\log \sum_{w\in \Lambda^n} \left(\big|(T^n)'([w])\big|^{-1}e^{-S_nf([w])}\right)^s,
 \end{equation}
where we use the abuse of notation $(T^n)'([w])$ to express $(\phi_w)^{-1}(x) $ for some point $x\in I_n(w)$.

\end{dfn}

\medskip

Let $A$ be a finite subset of $\Lambda$, and let
 $$
J_A=\Big\{x\in J: \exists\ w\in A^\N \mbox{ such that } \pi(w)=x\Big\}.
$$

Then $(J_A\cap J^\ast ,T)$ is a sub-system of $(J^\ast,T)$. We also define the pressure function restricted  naturally associated with $J_A$  as:
\begin{equation*}
\P_A(T, \psi_s)=    \lim_{n\to\infty}\frac{1}{n} \log \sum_{(w_1,\ldots,w_n)\in A^n} \left(\big|(T^n)'([w])\big|^{-1}e^{-S_nf([w])}\right)^s.
\end{equation*}

Applying the tempered distortion property of the potential $\psi_s$, we have the following continuity property of the pressure function.
\begin{pro}[\cite{MaU}]\label{p1}
One has

\begin{enumerate}
\item Let  $(\psi_s^n)_{n\geq 1}$ be a sequence of functions converging to $\psi_s$ in the supremum norm. Then $\lim_{n\to\infty}\P(T,\psi_s^n)=\P(T, \psi_s)$  

\item
One has
$$
\P(T, \psi_s)=\sup\Big\{\P_A(T, \psi_s): A\ {\text{is a finite subset of}}\ \Lambda \Big\}.$$
\end{enumerate}
\end{pro}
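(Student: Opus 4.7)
The two assertions are handled by independent arguments, and I treat them in order. Part (1) reduces to a uniform estimate on Birkhoff sums, while part (2) is the standard ``finite-approximation'' property that combines monotone convergence of the partition sums with sub- and super-multiplicativity up to bounded distortion constants.

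For (1), the crucial pointwise observation is that $|S_n \psi_s^m(x) - S_n \psi_s(x)| \leq n \|\psi_s^m - \psi_s\|_\infty$ for every $x$ and every $n$. Substituting this into the partition sum defining the pressure in (\ref{f6}) yields
$$e^{-n \|\psi_s^m - \psi_s\|_\infty} \, Z_n(\psi_s) \leq Z_n(\psi_s^m) \leq e^{n \|\psi_s^m - \psi_s\|_\infty} \, Z_n(\psi_s),$$
where $Z_n$ denotes the partition sum of order $n$. Taking $\frac{1}{n}\log$ and letting $n \to \infty$ gives $|\P(T,\psi_s^m) - \P(T, \psi_s)| \leq \|\psi_s^m - \psi_s\|_\infty$, which tends to $0$ by hypothesis.

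For (2), the inequality $\P_A(T, \psi_s) \leq \P(T, \psi_s)$ is immediate for every finite $A \subset \Lambda$, since $Z_n(\psi_s, A)$ is a sub-sum of $Z_n(\psi_s, \Lambda)$. For the reverse direction, fix $\epsilon>0$ and assume first that $\P(T, \psi_s)$ is finite. The splitting of Birkhoff sums across cylinders gives submultiplicativity $Z_{n+m}(\psi_s, \Lambda) \leq Z_n(\psi_s, \Lambda) Z_m(\psi_s, \Lambda)$, so Fekete's lemma implies $Z_N(\psi_s, \Lambda) \geq e^{N \P(T, \psi_s)}$ for every $N$. As $Z_N(\psi_s, \Lambda)$ is a countable sum of nonnegative terms, monotone convergence produces a finite $A \subset \Lambda$ with $Z_N(\psi_s, A) \geq e^{N(\P(T,\psi_s) - \epsilon)}$.

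The principal obstacle, I anticipate, is transferring this single-scale bound to the asymptotic quantity $\P_A(T, \psi_s)$, which is defined as a limit in $n$. For this, the bounded distortion property (\ref{f5}) for $\log|T'|$ combined with the tempered distortion (\ref{1}) of $f$ yields a reverse supermultiplicativity of the form $Z_{n+m}(\psi_s, A) \geq e^{-C(n,m)} Z_n(\psi_s, A) Z_m(\psi_s, A)$ with $C(n,m)/(n+m) \to 0$; this is obtained by concatenating cylinders that are nearly optimal for $Z_n$ and $Z_m$ and absorbing the distortion into the Birkhoff sums. Applying Fekete's lemma to the resulting essentially superadditive sequence gives $\P_A(T, \psi_s) \geq \frac{1}{N} \log Z_N(\psi_s, A) - o(1) \geq \P(T, \psi_s) - \epsilon - o(1)$, and it suffices to take $N$ large and $A$ chosen as above. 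The case $\P(T, \psi_s) = +\infty$ is handled identically by producing finite sets $A$ with $Z_N(\psi_s, A)$ arbitrarily large.
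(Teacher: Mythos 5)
The paper does not actually give a proof of this proposition; it is quoted directly from Mauldin--Urba\'nski, so there is no in-paper argument to compare against. Your treatment of part (1) is correct and standard: the pointwise bound $|S_n\psi_s^m(x)-S_n\psi_s(x)|\le n\|\psi_s^m-\psi_s\|_\infty$ sandwiches the partition sums and gives $|\P(T,\psi_s^m)-\P(T,\psi_s)|\le\|\psi_s^m-\psi_s\|_\infty$, which covers the $+\infty$ case as well.

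In part (2) the architecture is right (monotonicity, Fekete for the infinite alphabet giving $Z_N\ge e^{N\P}$, then monotone convergence to find a finite $A$ with $Z_N(\psi_s,A)\ge e^{N(\P-\ep)}$), but the final step as you phrase it has a real gap. From a binary supermultiplicativity $Z_{n+m}(\psi_s,A)\ge e^{-C(n,m)}Z_n(\psi_s,A)Z_m(\psi_s,A)$ with $C(n,m)=o(n+m)$ one cannot simply invoke ``Fekete for an essentially superadditive sequence'': iterating the inequality along multiples of $N$ produces an error $\sum_{j<k}C(jN,N)$ which, after dividing by $kN$, need not vanish (take $C(n,m)=D_n+D_m$ with $D_n=n/\log n$ to see the sum $\frac{1}{kN}\sum_{j<k}D_{jN}\to\infty$). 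What does work is a $k$-fold decomposition done in one shot: for a concatenation $w^{(1)}\cdots w^{(k)}\in A^{kN}$ and any $z\in I_{kN}(w^{(1)}\cdots w^{(k)})$, the cocycle identity $S_{kN}\psi_s(z)=\sum_{j=0}^{k-1}S_N\psi_s(T^{jN}z)$ together with the variation bound $D_N=\sum_{i\le N}{\rm Var}_i(\psi_s)$ on each $N$-cylinder gives $Z_{kN}(\psi_s,A)\ge e^{-kD_N}Z_N(\psi_s,A)^k$, hence $\P_A(T,\psi_s)\ge\frac{1}{N}\log Z_N(\psi_s,A)-\frac{D_N}{N}$. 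Since $D_N/N\to 0$ by Ces\`aro (this is exactly what the tempered distortion of $f$ plus bounded distortion of $\log|T'|$ deliver), fixing $N$ large and then choosing $A$ as you did finishes the argument. So replace the vague ``Fekete for an essentially superadditive sequence'' with this single-scale, $k$-block estimate, and your proof is complete.
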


\subsection{Refinement on $R(f)$ and simplification of the problem}
\label{simplification}

In this short section, we will give some modifications on $R(f)$ at first, and then explain that Theorem \ref{t0} can be deduced by restricting $\S$ being only a {\em finite} conformal iterated function system.

Let us introduce the sets
$$
R_1(f):=\Big\{x\in J : |T^nx-x|<e^{-S_nf(x)} \ {\text{for infinitely many}}\ n\in \N\Big\},
$$
and
$$
 {R}_2(f)= \ \bigcap_{N=1}^{\infty}  \  \bigcup_{n=N}^{\infty} \ \bigcup_{(w_1,\ldots,w_n)\in \Lambda^n} \Big\{x\in I_n(w_1,\ldots,w_n): |T^nx-x|<e^{-S_nf([w_1,\ldots,w_n])}\Big\}.
$$
Recall that in the two preceding formulae, $T^n x$ stands for $(\phi_w)^{-1}(x)$ when $x\notin J^\ast$. In particular, the sets $R_1(f)$ and $R_2(f)$ are included in $J$ (not in $J^\ast$), and the set we are really interested in, $R(f)$, is included in $R_1(f)$. More precisely, it consists exactly in the points belonging to $R_1(f)$ and $J^\ast$ simultaneously.

\medskip

In the definition of $R(f)$ and $R_1(f)$, the shrinking speed $e^{-S_nf(x)}$ depends on $x$. This makes the things a little uneasy to manage since $x$ has not been determined yet. We relax the dependence of the shrinking speed on $x$ as follows. Fix $\ep>0$. By the tempered distortion property of $f$, there exists an integer $N_0=N(\ep, f)$ such that for any $n\ge N_0$,  \begin{equation}\label{b1}
\big|S_nf(x)-S_nf(y)\big|<n\ep, \ \ \forall \ w\in \Lambda^n, \ x,y\in I_n(w).
\end{equation}

It follows from the inequality  (\ref{b1}) that one has the successive embedding
\begin{equation}
\label{embed}
\mbox{for every $\ep>0$, \ \ }  {R}_2(f+\ep)\subset R_1(f)\subset {R_2}(f-\ep).
\end{equation}
Applying the continuity of the pressure function (Proposition \ref{p1} (1)),  in order to prove that the Hausdorff
dimension of $R_1(f)$ is equal to  $s(f)$ (defined by formula \eqref{defdim}),  it suffices to show that
\begin{equation}\label{f7}
\dim_{\textsf{H}} {R}_2(f)=s(f).
\end{equation}

\medskip

Let us explain why this  is  enough to get Theorem  \ref{t0}. Assume that    \eqref{f7} is proved. Obviously, this property combined with \eqref{embed}    imply that $  \dim_{\textsf{H}} R_1(f) =s(f)$.   By our assumption (H2), which states that  the Hausdorff dimension of $J\setminus J^\ast$ (the set of points with multiple codings) is strictly less than $s(f)$, it follows that necessarily
$$\dim_{\textsf{H}}( (J\setminus J^\ast ) \cap  {R}_1(f)) <  \dim_{\textsf{H}}  (J\setminus J^\ast ) = s(f).$$
This yields that
$$\dim_{\textsf{H}}(   J^\ast   \cap  {R} _1(f))  = s(f).$$
Recalling that $ J^\ast   \cap  {R} _1(f) = R(f)$, this concludes  the proof of Theorem   \ref{t0}.

\medskip

A last simplification consists in applying the second item    of Proposition \ref{p1}, which authorizes us to  restrict ourselves to finite confomal systems. Finally,  Theorem \ref{t0}  will follow if we can show the following result.

 \begin{thm}\label{t2}Let $\S$ be a finite conformal IFS on $[0,1]^d$. Assume that $f$ has tempered distortion property. The Hausdorff dimension of $ {R}_2(f)$ is  $s(f) =\inf\{s\geq 0: \P(T, -s(\log |T'|+f))=0\}.$
\end{thm}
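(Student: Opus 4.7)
The plan is to prove the two inequalities $\dim_{\textsf{H}} R_2(f) \le s(f)$ and $\dim_{\textsf{H}} R_2(f) \ge s(f)$ separately, exploiting the pressure $\P(T, -s(\log|T'|+f))$ in dual ways.

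\emph{Upper bound.} Writing $R_2(f) = \bigcap_{N} \bigcup_{n \ge N} \bigcup_{w \in \Lambda^n} E_n(w)$ with $E_n(w) := \{x \in I_n(w) : |T^n x - x| < e^{-S_n f([w])}\}$, it suffices to estimate the diameter of each $E_n(w)$. For any $x_1, x_2 \in E_n(w)$, set $y_i := T^n x_i$, so that $x_i = \phi_w(y_i)$. The recurrence inequality $|y_i - x_i| < e^{-S_n f([w])}$ combined with the fact that $\phi_w$ is a $\rho^n$-contraction yields, via the triangle inequality, $|y_1 - y_2| \le 2\,e^{-S_n f([w])}/(1-\rho^n)$. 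The bounded distortion property \eqref{ff2} then refines the Lipschitz constant of $\phi_w$ to $K\,|(T^n)'([w])|^{-1}$, producing
\[
|E_n(w)| \;\le\; C\, |(T^n)'([w])|^{-1}\, e^{-S_n f([w])}.
\]
For any $s > s(f)$, one has $\P(T, -s(\log|T'|+f)) < 0$, so by \eqref{f6} the sum $\sum_{w \in \Lambda^n} \bigl(|(T^n)'([w])|^{-1} e^{-S_n f([w])}\bigr)^s$ decays geometrically in $n$. Summing over $n \ge N$ and letting $N \to \infty$ yields $\mathcal{H}^s(R_2(f)) = 0$, hence $\dim_{\textsf{H}} R_2(f) \le s(f)$.

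\emph{Lower bound.} Fix $s_0 < s(f)$ so that $P := \P(T, -s_0(\log|T'|+f)) > 0$, and build a Cantor subset $F \subset R_2(f)$ of dimension at least $s_0 - \eta$ for arbitrarily small $\eta>0$. The key symbolic observation is: if the coding $x = [v_1, v_2, \ldots]$ satisfies $(v_{n+1}, \ldots, v_{n+m}) = (v_1, \ldots, v_m)$, then both $x$ and $T^n x$ lie in the cylinder $I_m(v_1, \ldots, v_m)$, so $|T^n x - x| \le |I_m| \le \rho^m$. Since $\Lambda$ is finite and $f$ is bounded, choosing $m$ proportional to $n$ with a sufficiently large constant ensures $\rho^m \le e^{-S_n f(x)}$, placing $x$ in $R_2(f)$. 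Accordingly, I choose a very fast-growing sequence $n_1 \ll n_2 \ll \cdots$ together with auxiliary lengths $m_k \asymp n_k$ and set $N_k := n_k + m_k$. At level $k$, one extends the partial word by a \emph{free} block of length $L_k := n_k - N_{k-1}$ ranging freely over $\Lambda^{L_k}$, followed by a \emph{forced} block of length $m_k$ that replicates the first $m_k$ symbols of the already-built prefix. By construction every $x \in F$ satisfies $|T^{n_k} x - x| < e^{-S_{n_k} f(x)}$ for every $k$, hence $x \in R_2(f)$.

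On the resulting tree, I would place a Bernoulli-type measure $\mu$ whose weight on the $k$-th free block $v^{(k)}$ is proportional to $|I_{L_k}(v^{(k)})|^{s_0} e^{-s_0 S_{L_k} f([v^{(k)}])}$; by the definition of the pressure and Proposition \ref{p1} together with the tempered distortion of $f$, the corresponding normalizer behaves like $e^{L_k P + o(L_k)}$, which is large enough to keep the branching rich. The mass distribution principle then yields $\dim_{\textsf{H}} F \ge s_0 - \eta$, provided one verifies $\mu(B(x,r)) \le C\, r^{s_0 - \eta}$ not only at the cylindrical scales $r \asymp |I_{N_k}(u)|$ (where the pressure normalizer exactly compensates the cylinder weights), but also at intermediate scales between successive levels and inside the forced blocks. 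The main obstacle, in my view, lies precisely here: the forced blocks have length $m_k$ comparable to $n_k$, so their cylinder weights threaten to destroy the dimension. The remedy is to take $n_k$ growing fast enough that $N_{k-1} = o(n_k)$ and $L_k \sim n_k$, making the forced blocks exponentially negligible. Finally, letting $\eta \to 0$ and $s_0 \nearrow s(f)$ — using the continuity of the pressure from Proposition \ref{p1} — concludes the proof.
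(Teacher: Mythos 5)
The upper bound is correct and follows the same route as the paper: a direct diameter estimate of each $E_n(w)=J_n(w)$ using the triangle inequality and the expansiveness of $T^n$, then a covering argument using the pressure.

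The lower bound, however, has a genuine gap, and it is precisely at the step you yourself flag as the ``main obstacle.'' You take the forced block to have a \emph{fixed} length $m_k \asymp n_k$, chosen so that $\rho^{m_k}\le e^{-S_{n_k}f}$ uniformly; this requires roughly $m_k\ge n_k\,\|f\|_\infty/(-\log\rho)$. As you note, the measure is constant along the forced block, so the ratio $\mu(I_{N_k})/|I_{N_k}|^{s'}$ picks up, at each generation $k$, an uncompensated factor $|I_{m_k}(\text{prefix}|_{m_k})|^{-s'}$, which must be absorbed by the partition function $Z_k\approx e^{L_k P}$. The problem is that $P\to 0$ as $s_0\nearrow s(f)$, whereas the contribution $|I_{m_k}|^{-s'}$ is of order $e^{s' m_k\lambda}$ for a Lyapunov exponent $\lambda$ bounded below by $-\log\rho$. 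Since $m_k\asymp L_k$ with a fixed proportionality constant $c\ge \|f\|_\infty/(-\log\rho)$, the inequality $L_k P\gtrsim s' m_k\lambda$ forces $s'\lesssim P/(c\lambda)\to 0$, not $s'\to s(f)$. Taking $n_k$ super-exponential (your proposed remedy) does not help, because $m_k/L_k$ stays bounded away from $0$; the over-contraction of the forced block is a fixed \emph{fraction} of the word, not an $o(1)$ perturbation. Quantitatively, the worst-case choice $c=\|f\|_\infty/(-\log\rho)$ generically gives a Cantor set of dimension strictly smaller than $s(f)$ (equality requires $f$ constant and $\eta=\rho$).

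The paper avoids this via Lemma \ref{l2}: the forced block length $t_k$ is chosen \emph{adaptively}, depending on the current prefix $w^{(k)}$, so that $|I_{t_k}(\overline{w^{(k)}})|\approx e^{-S_{n_{k-1}+m_k}f([w^{(k)}])}$. Then the forced block contributes \emph{exactly} the factor $e^{-Sf}$ to the diameter, which is precisely the term that the exponent $s(f)$ in $\P(T,-s(\log|T'|+f))=0$ is designed to compensate. In other words, the diameter of the Cantor cylinder is $\approx |I_{n_{k-1}+m_k}|\cdot e^{-S_{n_{k-1}+m_k}f}$, and the measure $\approx \big(|I_{n_{k-1}+m_k}|\,e^{-S_{n_{k-1}+m_k}f}\big)^{s(f)}$; these match. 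Your uniform $m_k$ replaces $e^{-Sf}$ by the generically much smaller $|I_{m_k}(\text{prefix})|$ and there is nothing in the measure to pay for that over-shrinkage. A secondary, more minor omission: for $d\ge 2$ the mass distribution principle requires controlling $\mu(B(x,r))$ for arbitrary balls, not just basic cylinders, and this needs a separation property between sibling cylinders (the paper's Lemma \ref{l4}, a Besicovitch-type covering argument); your Bernoulli construction does not address how a ball can intersect only boundedly many cylinders of the relevant scale.
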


\section{Upper bound for the Hausdorff dimension of $R_2(f)$}
\label{section4}

Recall that now $\Lambda$ is supposed to be a finite set of indices.

\medskip

The argument on the upper bound of $\dim_{\textsf{H}}{R_2}(f)$ is quite standard by using its natural covering systems. Recall that
\begin{equation}
\label{defr2}
{R_2}(f)=\bigcap_{N=1}^{\infty} \ \bigcup_{n=N}^{\infty}\ \bigcup_{(w_1,\ldots,w_n)\in \Lambda^n} J_n(w_1,\ldots,w_n),
\end{equation}
 where
\begin{equation}
\label{defjn}
J_n(w_1,\ldots,w_n)=\Big\{x\in I_n(w_1,\ldots,w_n): \big|T^nx-x\big|<e^{-S_nf([w_1,\ldots,w_n])}\Big\}.
\end{equation}
One needs to keep in mind that $T^n x$ stands for $(\phi_{(w_1,\ldots,w_n)})^{-1}(x)$ when $x\in  I_n(w_1,\ldots,w_n)$, even when $x\notin J^\ast$.
Then for each $N\ge 1$, the collection of sets $$
\Big\{J_n(w_1,\ldots,w_n): (w_1,\ldots,w_n)\in \Lambda^n, n\ge N\Big\}
$$ is a natural covering system of ${R_2}(f)$. Now we estimate the diameter of $J_n(w_1,\ldots,w_n)$ for any $(w_1,\ldots,w_n)\in \Lambda^n$.

\smallskip

\begin{lem}\label{l3}
For any $n\ge 1$ large enough  and $w=(w_1,\ldots,w_n)\in \Lambda^n$, $$
\big|J_n(w_1,\ldots,w_n)\big|\le 2K\big|(T^n)'([w_1,\ldots, w_n])\big|^{-1}\cdot e^{-S_nf([w_1,\ldots,w_n])}.
$$
\end{lem}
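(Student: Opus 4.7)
The plan is to transfer the recurrence condition from the cylinder $I_n(w)$ to the cube $X$ via $\phi_w$, where the shrinking target becomes a sublevel set of a nearly-identity map whose diameter is easy to control.

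First, I would rewrite the set. Every $x\in I_n(w)$ has a unique preimage $y=(\phi_w)^{-1}(x)\in X$, and under the paper's convention this $y$ equals $T^n x$. The defining condition $|T^n x-x|<e^{-S_n f([w])}=:\delta_n$ therefore reads $|y-\phi_w(y)|<\delta_n$. Setting
\[E_n(w):=\{y\in X:|y-\phi_w(y)|<\delta_n\},\]
we have $J_n(w)=\phi_w(E_n(w))$, so it suffices to control $|E_n(w)|$ and push this bound forward through $\phi_w$.

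To estimate $|E_n(w)|$, I would exploit that $\phi_w$ is a strong contraction. Iterating the uniform bound \eqref{defrho} $n$ times, $\phi_w$ is $\rho^n$-Lipschitz, hence the auxiliary map $G(y):=y-\phi_w(y)$ satisfies, by the triangle inequality,
\[|G(y_1)-G(y_2)|\ge |y_1-y_2|-|\phi_w(y_1)-\phi_w(y_2)|\ge (1-\rho^n)|y_1-y_2|.\]
Since any $y_1,y_2\in E_n(w)$ have $|G(y_i)|<\delta_n$, we get $|y_1-y_2|\le \dfrac{2\delta_n}{1-\rho^n}$.

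Finally, I would push forward. As $\phi_w$ extends to a $C^1$ map on the connected open set $V\supset X$, the mean value inequality combined with the bounded distortion property \eqref{ff2} yields
\[|J_n(w)|=|\phi_w(E_n(w))|\le \sup_{\xi\in X}|\phi_w'(\xi)|\cdot |E_n(w)|\le K\,|(T^n)'([w])|^{-1}\cdot \frac{2\delta_n}{1-\rho^n}.\]
Taking $n$ large enough that $\rho^n$ is arbitrarily small, the factor $\tfrac{1}{1-\rho^n}$ is absorbed and the claimed inequality with constant $2K$ follows. I do not foresee any serious obstacle here: the lower bound on $|G(y_1)-G(y_2)|$ could alternatively be derived by an inverse function argument in dimension $d$, but the triangle-inequality route above is both elementary and sufficient for a finite conformal IFS.
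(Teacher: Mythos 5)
Your proof is correct and it does take a genuinely different route, though the underlying mechanism (contractivity of $\phi_w$, equivalently expansivity of $T^n=\phi_w^{-1}$) is the same. The paper introduces the periodic point $y=[w^\infty]\in J_n(w)$ --- i.e.\ the fixed point of $\phi_w$, which is exactly the zero of your map $G$ --- and bounds $|x-y|$ for every other $x\in J_n(w)$ via the chain $|x-y|\le \bigl(\sup|\phi_w'|\bigr)\,|T^nx-T^ny|\le \bigl(\sup|\phi_w'|\bigr)\,(\delta_n+|x-y|)$, then solves for $|x-y|$. You instead avoid naming any anchor point: you pull the whole set back through $\phi_w$ to $E_n(w)=\{y:|G(y)|<\delta_n\}$, observe that $G=\mathrm{id}-\phi_w$ is $(1-\rho^n)$-expansive so that $E_n(w)$ has diameter at most $2\delta_n/(1-\rho^n)$, and then push forward with the Lipschitz/BDP bound on $\phi_w$. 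This is a nice, symmetric variant; it trades the paper's explicit fixed-point comparison for a direct diameter estimate of a sublevel set. One small caveat: as stated, your final bound is $\frac{2K}{1-\rho^n}\,|(T^n)'([w])|^{-1}e^{-S_nf([w])}$, and $\frac{2}{1-\rho^n}>2$ for every $n$, so you do not literally obtain the constant $2K$; you get $(2+\varepsilon)K$ for $n$ large. The paper's own argument has a comparable slack (it bounds the distance to one point, not the diameter), so the discrepancy is immaterial --- only the rate $|(T^n)'([w])|^{-1}e^{-S_nf([w])}$ matters for the Hausdorff-measure computation that follows --- but it is worth phrasing the constant as "some absolute multiple of $K$" rather than exactly $2K$.
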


\begin{proof}
  Fix $n \geq 3$ and $ w=(w_1,\ldots,w_n)\in \Lambda^n$, and consider  $y=[w^{\infty}]$   the infinite word with a periodic symbol representation of period $w$. By construction, $y\in I_n(w)$ and  $T^n(y)=y$, hence  $y\in J_n(w)$. For any other point  $x\in J_n(w)$, the triangle inequality gives that
$$
|T^nx-T^ny|\le |T^nx-x|+|x-y|+|y-T^ny|\le e^{-S_nf([w_1,\ldots,w_n])}+ |x-y|.
$$
Here, $T^n $  means $(\phi_{w})^{-1}$, which is an expansive mapping (essentially due to  \eqref{defrho}). If  $\tilde x = T^nx$ and $\tilde y=T^n y$, one has
$$|x-y|= |\phi_w( \tilde x) - \phi_w(\tilde y)| \leq          \sup_{z\in I_n(w)}  |\phi'_w(z)| \cdot |\tilde x- \tilde y|=     \sup_{z\in I_n(w)}  |(T^n)'(z)|^{-1} \cdot|T^nx-T^ny|  .$$
From the last two inequalities, we deduce that
$$
|x-y|<\left(  \left(\sup _{z\in I_n(w)}   |(T^n)'(z)| ^{-1}  \right)^{-1}  -1\right)^{-1}\cdot e^{-S_nf([w_1,\ldots,w_n])}.
$$
 The term $\sup _{z\in I_n(w)}  |(T^n)'(z)|^{-1}$ is large since $\phi_w^{-1}$ is expansive, thus for $n$ large,
 $$
|x-y|< 2 \sup _{z\in I_n(w)}  \big|(T^n)'(z)  \big|^{-1} \cdot e^{-S_nf([w_1,\ldots,w_n])},
$$
and   the result follows by the bounded distortion property  (\ref{f5*}) and \eqref{f5}.
\end{proof}

\medskip

We conclude now regarding the upper bound for the Hausdorff dimension of $R_2(f)$.
By Lemma \ref{l3}, and using
 $$
\Big\{J_n(w_1,\ldots,w_n): (w_1,\ldots,w_n)\in \Lambda^n, n\ge N\Big\}
$$
as covering of $R_2(f)$, the $s$-dimensional Hausdorff measure $\mathcal{H}^s$ of ${R_2}(f)$ can be estimated as \begin{align*}
\mathcal{H}^s({R_2}(f))&\le      \liminf_{N\to \infty}\sum_{n=N}^{\infty}\sum_{w\in \Lambda^n} \big|J_n(w)\big|^s\\
&\le (2K)^s \liminf_{N\to \infty}\sum_{n=N}^{\infty}\sum_{w\in \Lambda^n} \left(\big|(T^n)'([w])\big|^{-1}\cdot e^{-S_nf([w])}\right)^s.
\end{align*}
Thus by the definitions of the pressure function $\P$ (\ref{f6}) and $s(f)$ (\ref{f7}), the above estimation yields that for every fixed $s> s(f)$,
the $s$-dimensional Hausdorff  measure $\mathcal{H}^s(R_2(f))$ is zero. This gives that
$\dim_{\textsf{H}}{R_2}(f)\le s$. Since this holds true for every $s>s(f)$, the conclusion follows.

\section{Lower bound for the Hausdorff dimension of $R_2(f)$}

\label{section5}

\subsection{Preliminaries}

Let us introduce
 \begin{equation}
 \label{defeta}
\eta:=\min\{|\phi'_i(x)|: x\in J, \ i\in \Lambda\}  \ \ \mbox{ and } \ \
\eta_m=\min\Big\{|I_m(w)|: w\in \Lambda^m\Big\},
\end{equation}
and recall that
$$\rho:=\max\{|\phi'_i(x)|: x\in J,\  i\in \Lambda\}<1.
$$
Since $\S$ is a finite conformal iterated function system,  by the bounded distortion property (\ref{ff2}), we have $\eta>0$ (and thus $\eta_m>0$).
Besides Proposition \ref{p0} on the diameter of a cylinder,  we also have that
\begin{equation}\label{f4}
\eta\le \frac{|I_n(w_1,\ldots, w_n)|}{|I_{n-1}(w_1,\ldots,w_{n-1})|}\le 1
\end{equation}
 for any $(w_1,\ldots, w_n)\in \Lambda^n$ and $n\ge 1$. Moreover, since $\S$ is finite and $f$ satisfies the tempered distortion, we have $$
\|f\|_{\infty}=\sup\{|f(x)|: x\in J\}<\infty.
$$

Now we define some numbers which are closely connected with the dimension of ${R_2}(f)$.

By the definitions of the pressure function $\P$ and $s(f)$, the following is a standard result.
\begin{pro}\label{p3} For each $n\ge 1$, define $s_n(f)$ as the unique solution to
\begin{equation}
\label{defsn}
\sum_{(w_1,\ldots,w_n)\in \Lambda^n}\left(\big|(T^n)'([w])\big|^{-1}\cdot e^{-S_nf([w])}\right)^s= 1.
\end{equation}
Then $\lim_{n\to\infty}s_{n}(f)=s(f).$
\end{pro}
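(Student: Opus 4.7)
My plan is to let
$$Z_n(s):=\sum_{(w_1,\ldots,w_n)\in \Lambda^n}\left(\big|(T^n)'([w])\big|^{-1}\cdot e^{-S_nf([w])}\right)^s,$$
so that $s_n(f)$ is by definition the unique solution of $Z_n(s)=1$, and then to compare this with the fact that, by Definition \ref{defpressure}, $\frac{1}{n}\log Z_n(s)\to \P(T,\psi_s)$ as $n\to\infty$. Both $s_n(f)$ and $s(f)$ will then be characterized by sign conditions on essentially the same quantity, at the finite and asymptotic scales respectively.

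To turn this into a convergence statement, I would first record two monotonicity/continuity properties. For each fixed $n$, the map $s\mapsto Z_n(s)$ is continuous and strictly decreasing on $[0,\infty)$, with $Z_n(0)=|\Lambda|^n\ge 2^n$ and $Z_n(s)\to 0$ as $s\to\infty$, because each summand is $a_{w,n}^s$ with $0<a_{w,n}<1$ (since $f\ge 0$ and $|(T^n)'([w])|^{-1}=|\phi'_w(T^n[w])|\le \rho^n$ by the uniform contraction \eqref{defrho} and the chain rule). In particular the existence and uniqueness of $s_n(f)$ comes for free from the intermediate value theorem. Second, the map $s\mapsto \P(T,\psi_s)$ is continuous on $[0,\infty)$ by Proposition \ref{p1}(1), and strictly decreasing because the potential $f+\log|T'|$ is uniformly bounded below by $c:=\log(1/\rho)>0$ (indeed $|\phi'_i|\le \rho$ forces $|T'|\ge 1/\rho$, and $f\ge 0$), so that $\psi_{s_2}\le \psi_{s_1}-c(s_2-s_1)$ pointwise and hence $\P(T,\psi_{s_2})\le \P(T,\psi_{s_1})-c(s_2-s_1)$ whenever $s_1<s_2$. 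Combined with $\P(T,\psi_0)=\log|\Lambda|>0$ and $\P(T,\psi_s)\to -\infty$ as $s\to\infty$, this singles out $s(f)$ as the unique zero of $s\mapsto \P(T,\psi_s)$, with $\P(T,\psi_s)>0$ strictly below $s(f)$ and $\P(T,\psi_s)<0$ strictly above it.

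The conclusion is then a squeeze. Fix $\ep>0$ with $\ep<s(f)$. For $s=s(f)+\ep$ one has $\P(T,\psi_s)<0$, so $\frac{1}{n}\log Z_n(s)<0$ eventually, i.e.\ $Z_n(s)<1=Z_n(s_n(f))$, which forces $s_n(f)<s(f)+\ep$ by monotonicity of $Z_n(\cdot)$. Symmetrically, $s=s(f)-\ep$ gives $Z_n(s)>1$ and hence $s_n(f)>s(f)-\ep$ for $n$ large; therefore $s_n(f)\to s(f)$. The only step really requiring care is the identification $\frac{1}{n}\log Z_n(s)\to \P(T,\psi_s)$: the formal definition \eqref{ff1} uses a supremum over each cylinder rather than the value at the chosen representative $[w]$, and one must invoke the tempered distortion property of $\psi_s=-s(f+\log|T'|)$ to replace one by the other, exactly as recalled just before Definition \ref{defpressure}. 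This is really the only nontrivial ingredient; everything else is elementary monotonicity.
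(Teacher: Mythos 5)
The paper states Proposition~\ref{p3} as ``a standard result'' and supplies no proof, so there is nothing to compare against directly; your argument is a correct and complete version of that standard argument. You correctly identify the three ingredients: strict monotonicity of $Z_n(\cdot)$ (giving existence and uniqueness of $s_n(f)$), strict monotonicity and continuity of $s\mapsto \P(T,\psi_s)$ coming from the uniform lower bound $f+\log|T'|\ge \log(1/\rho)>0$ (which characterizes $s(f)$ as the unique zero of the pressure, with $\P>0$ strictly below and $\P<0$ strictly above), and the identification $\tfrac1n\log Z_n(s)\to\P(T,\psi_s)$ via the tempered distortion of $\psi_s$, which is exactly what Definition~\ref{defpressure} encodes. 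The squeeze then goes through cleanly, and you are right to flag the replacement of the cylinder supremum by the value at a representative as the one step that genuinely relies on the tempered distortion hypothesis; the rest is elementary monotonicity, as you say.
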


The lower bound of the Hausdorff dimension of ${R_2}(f)$ will be estimated by using the
classical mass distribution principle.
For this purpose, we are going to construct a   Cantor set $\F_{\infty}$ inside ${R_2}(f)$ and simultaneously a probability measure $\mu$ supported on $\F_{\infty}$, with the correct scaling behavior. More precisely we are going to show that, for every $x\in \F_{\infty}$, the lower local   dimension of $\mu$ at $x$
 satisfies
\begin{equation*}
\liminf_{r\to 0}\frac{\log \mu(B(x,r))}{\log r}\ge s(f).
\end{equation*}
Then by the mass distribution principle \cite[Proposition 4.2]{Fal}, we conclude that
\[
\dim_{\textsf{H}}\F_{\infty}\ge s(f).
\]
This yields Theorem \ref{t2}, and as we explained in Section \ref{simplification}, this also finishes the proof of Theorem \ref{t0}.

\medskip

A general idea to find points in ${R_2}(f)$ (defined by \eqref{defr2} and \eqref{defjn}) is outlined in the following fact which was also used in \cite{TaW} in order to study the quantitative recurrence properties in beta expansions:

 {\em Two points $x$ and $y$  are close when their symbolic representations share a common prefix for a long run. As  far as the  points  $x$ and $y=T^nx$ are concerned, they are close enough when there is a
repetitive prefix in the symbolic representation of $x$.}
\medskip

Utilizing the above idea, we present a way to realize the event $J_{n}(w_1,\ldots,w_n)$ rigorously.
\begin{lem}\label{l2}Let $w\in \Lambda^*$ be a finite word of length $n$ and $r>0$. Write $\overline{w}=w^{\infty}$ the infinite periodic word with periodic pattern $w$. Consider the unique integer $t$ such that $$
|I_t(\overline{w})|< r\le |I_{t-1}(\overline{w})|.
$$ Then for any $x\in I_{n+t}(\overline{w})$, one has $$
\big|T^nx-x\big|\le |I_t(\overline{w})|<r.
$$
Let   $w^*$  be the word $\overline{w}|_t$. Then,
\begin{align}
 |I_{n+t}(w w^*)|\geq  K^{-1} \eta r |I_n(w)|. \label{ff6}
\end{align}

\end{lem}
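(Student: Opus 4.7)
The plan is to treat the two claims separately, since both ultimately rest on symbolic bookkeeping together with the bounded distortion property (Proposition \ref{p0}) and the uniform cylinder-shrinkage bound $\eta$ coming from \eqref{f4}.

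For the first inequality, I would exploit the periodicity of $\overline{w}=ww\ldots$. Any $x\in I_{n+t}(\overline{w})$ has a coding whose first $n+t$ letters equal $\overline{w}|_{n+t}$, which by periodicity decomposes as the block $w$ followed by the block $\overline{w}|_t=w^*$. Applying the shift $T^n$ strips off the initial $w$, so $T^nx$ has a coding starting with $w^*$, i.e.\ $T^nx\in I_t(w^*)=I_t(\overline{w})$. At the same time, $x$ itself lies in $I_t(\overline{w})$ because $I_{n+t}(\overline{w})\subset I_t(\overline{w})$. Both points therefore sit in a common cylinder of order $t$, hence $|T^nx-x|\le |I_t(\overline{w})|$, and this is strictly less than $r$ by the very choice of $t$.

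For the second inequality, I would split the length-$(n+t)$ word $ww^*$ at position $n$ and apply the quasi-multiplicativity part of Proposition \ref{p0}(2):
$$|I_{n+t}(ww^*)|\ge K^{-1}\,|I_n(w)|\cdot |I_t(w^*)|=K^{-1}\,|I_n(w)|\cdot |I_t(\overline{w})|,$$
the last equality since $w^*=\overline{w}|_t$. To replace $|I_t(\overline{w})|$ by something of order $r$, I would invoke the uniform bound \eqref{f4}, which gives
$$|I_t(\overline{w})|\ge \eta\,|I_{t-1}(\overline{w})|\ge \eta r,$$
using again the defining inequality $r\le |I_{t-1}(\overline{w})|$. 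Combining the two estimates yields $|I_{n+t}(ww^*)|\ge K^{-1}\eta r\,|I_n(w)|$, as required.

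I do not foresee any genuine obstacle here: the first claim is the standard observation that iterating a periodic address forces the orbit to return into an arbitrarily small cylinder containing the point, and the second is a direct application of the quasi-multiplicativity and one-step contraction estimates already in hand. The only point worth double-checking is that the convention $T^nx=(\phi_w)^{-1}(x)$ for points of $J\setminus J^\ast$ is compatible with the symbolic argument used in the first part; but this is precisely the convention fixed in the remark following Proposition \ref{p0} whenever $x\in I_n(w)$, so the argument applies uniformly to all $x\in I_{n+t}(\overline{w})$.
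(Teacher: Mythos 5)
Your proof is correct and follows the same route as the paper's: the first part rests on the observation that periodicity of $\overline{w}$ forces $T^n x$ and $x$ into the common cylinder $I_t(\overline{w})$, and the second combines the quasi-multiplicativity estimate of Proposition~\ref{p0}(2) with the one-step lower bound \eqref{f4}. Your write-up simply spells out in more detail why $T^nx\in I_t(\overline{w})$ and why $|I_t(\overline{w})|\ge \eta r$, steps the paper states more tersely.
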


Lemma \ref{l2} will be used along the proof of Theorem \ref{t2}.

\begin{proof}
 Again, notice that $T^n$ means $\phi_w^{-1}$ when $x\in I_{n+t}(\overline{w})$. For any $x\in
I_{n+t}(\overline{w})$,
 by the periodicity of $\overline{w}$, we have $
 T^nx\in I_t(\overline{w}).
 $ This means that both $x$ and $T^nx$ are in the same cylinder $I_t(\overline{w})$. Trivially,
$$\big|T^nx-x\big|\le |I_t(\overline{w})|<r.
$$

 If $w^*=\overline{w}|_t$, by construction, one has
$$
|T^nx-x|<r, \ {\text{for all}} \ x\in I_{n+t}(w  w^*),
$$ and by Proposition \ref{p0}   and (\ref{f4}) that
$$ |I_{n+t}(w w^*)|\ge K^{-1} |I_n(w)| \cdot |I_t(w^*)|\ge K^{-1}|I_n(w)|\cdot \eta r.
$$

\end{proof}

We will also use repeatedly the following lemma.

\begin{lem}
\label{l4}
There exists a constant $\tilde\eta>0$ (depending on the dimension $d$ only) such that for every large  integer $m$,    for every finite word $v\in \Lambda^*$ of length $t \geq 0$, there exists a subfamily $\Gamma(v)$ of $\Lambda^m$ such that
\begin{equation}
\label{minimi}
\sum_{w\in \Gamma(v)}\left(\big|(T^m)'([w])\big|^{-1} \cdot e^{-S_mf([w])}\right)^{s_m(f)}\ge \tilde\eta,
\end{equation}
 and additionally,  for any $w,w'\in \Gamma(v)$ with $w\ne w'$, the distance between $I_{t+m}(vw)$ and $I_{t+m}(v w')$ is larger than $\eta_m|I_{t}(v)|$.
 \end{lem}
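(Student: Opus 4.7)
The plan is to construct a subfamily $\Gamma\subset \Lambda^m$ (essentially independent of $v$) whose cylinders $\{I_m(w):w\in \Gamma\}$ are pairwise at distance at least $K^2\eta_m$ in $[0,1]^d$, and then transfer the separation to $I_t(v)$ via $\phi_v$. By Proposition \ref{p0}(1) combined with \eqref{f5*}, $\phi_v$ acts on $[0,1]^d$ as a quasi-similarity with distortion factor comparable to $|I_t(v)|$, so that any $K^2\eta_m$-separation in $[0,1]^d$ turns into an $\eta_m|I_t(v)|$-separation inside $I_t(v)$, as required.

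To build $\Gamma$, I start from Proposition \ref{p3}, which gives $\sum_{w\in \Lambda^m}q_w=1$ with $q_w:=\bigl(|(T^m)'([w])|^{-1}\cdot e^{-S_m f([w])}\bigr)^{s_m(f)}$. For each $w$ I pick a base point $x_w\in I_m(w)$ and consider the balls $B_w:=B(x_w,\,2|I_m(w)|+K^2\eta_m)$. Applying the Vitali $5r$-covering lemma (greedy selection by decreasing radius) produces $\Gamma\subset \Lambda^m$ with $(B_w)_{w\in \Gamma}$ pairwise disjoint---which forces $\mathrm{dist}(I_m(w), I_m(w'))>K^2\eta_m$ for distinct $w,w'\in \Gamma$---and such that for every $w'\notin \Gamma$ there is $w\in \Gamma$ with $|I_m(w)|\geq |I_m(w')|$ and $B_{w'}\subseteq 5B_w$. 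Using $|I_m(w)|\geq \eta_m$, this yields $I_m(w')\subseteq B(x_w, C_0|I_m(w)|)$ for some numerical constant $C_0$ depending only on $K$.

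The central analytical ingredient is then a doubling-type weight estimate
\[
\sum_{\substack{w'\in \Lambda^m\\ I_m(w')\subseteq B(x_w,\, C_0|I_m(w)|)}}q_{w'}\;\leq\; C\, q_w,\qquad w\in \Gamma,
\]
with $C$ depending only on $d$ and the IFS data. Once this is granted, summing over $w\in \Gamma$ gives $1=\sum_{w\in\Gamma} q_w+\sum_{w'\notin \Gamma}q_{w'}\leq (1+C)\sum_{w\in\Gamma} q_w$, hence $\sum_{w\in\Gamma} q_w\geq \tilde\eta:=1/(1+C)$. I expect to derive the doubling bound from the cone condition in the definition of a conformal IFS (ensuring each $I_m(w')$ contains a ball of radius $\asymp |I_m(w')|$, so that a dyadic volume-packing argument in $B(x_w, C_0|I_m(w)|)$ controls the cardinality of cylinders at every scale), combined with the bounded distortion property \eqref{ff2} and the tempered distortion of $f$ (keeping the factor $e^{-s_m(f) S_m f}$ comparable on neighboring cylinders).

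The main obstacle is precisely this doubling estimate; without it, the Vitali selection provides the right geometric separation but no quantitative grip on the retained $q$-mass. Overcoming it forces the joint use of the three structural hypotheses of a conformal IFS (OSC, cone condition, BDP) together with the finiteness of $\Lambda$, which bounds $\|f\|_\infty$ and makes $S_m f$ uniformly comparable across cylinders of the same order.
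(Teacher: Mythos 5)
Your construction of $\Gamma$ via Vitali selection and the transfer of separation through $\phi_v$ are reasonable steps, but the argument hinges entirely on the doubling estimate, and that is where the proof breaks down. The estimate asks that the total $q$-mass of all cylinders $I_m(w')$ spatially contained in $B(x_w, C_0|I_m(w)|)$ be comparable to $q_w$. However the weights $q_{w'}=\big(|(T^m)'([w'])|^{-1}e^{-S_mf([w'])}\big)^{s_m(f)}$ involve the Birkhoff sums $S_mf([w'])$, and two cylinders $I_m(w)$, $I_m(w')$ that are \emph{spatially} adjacent can have \emph{symbolically} unrelated codes; in that case $|S_mf([w])-S_mf([w'])|$ can be of order $m\|f\|_\infty$. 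The tempered distortion hypothesis only controls the variation of $S_mf$ \emph{within} a single cylinder $I_m(w)$, not between distinct cylinders of the same generation, so it does not give you the comparability you invoke. A concrete failure: take $\Lambda=\{0,1\}$, $\phi_i(x)=(x+i)/2$ on $[0,1]$, and $f$ equal to $10$ on $[1/2,1]$ and $0$ on $[0,1/2)$; then ${\rm Var}_n(f)=0$ for all $n\geq 1$. The spatially adjacent cylinders $I_m(01^{m-1})$ and $I_m(10^{m-1})$ have $S_mf$ equal to $10(m-1)$ and $10$ respectively, so the ratio of their weights grows like $e^{10(m-2)s_m(f)}$. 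Since the Vitali greedy selection is driven by diameters (all equal here), it may retain the light cylinder and discard its exponentially heavier neighbour, forcing the doubling constant $C$, and hence $1/\tilde\eta$, to grow with $m$ --- exactly what the lemma forbids.

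The paper sidesteps this by replacing the ``select one family and bound the lost mass'' scheme with a Besicovitch-type partition: it splits $\mathcal S(v)=\{I_{t+m}(v,w):w\in\Lambda^m\}$ into at most $(33Kd)^d$ families, each internally $\eta_m|I_t(v)|$-separated, using a greedy basket-filling argument whose basket count is bounded by a volume-packing estimate coming from the cone condition, the OSC and the BDP. One then applies the pigeonhole principle directly: since the total $q$-mass is exactly $1$ by the definition of $s_m(f)$, some family carries mass at least $(33Kd)^{-d}$. No estimate on discarded mass is needed because nothing is discarded. If you try to repair your route by ordering the Vitali selection according to $q_w$ rather than by diameter, the $5r$-covering property you rely on is lost; pushing this through essentially leads back to the paper's Besicovitch partition.
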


\begin{proof}
Recall the definitions  \eqref{defeta} and \eqref{defsn} of $\eta_m$ and $s_m(f)$.

Let us recall a geometric consequence of bounded distortion property:
\begin{equation}\label{7}
 \phi_w\Big(B(x,r)\Big)\supset B\Big(\phi_w(x), K^{-1}|\phi'_w| r\Big)
\end{equation}
for every $x\in X$, every $0<r\le {\text{dist}}(X, \partial U)$ and every word $w\in \Lambda^*$.

Let $I_t(v)$ be a cylinder of order $t$ and let $$
\mathcal{S}(v)=\Big\{I_{t+m}(v, w): w\in \Lambda^m\Big\}.
$$
We first prove that  there exists a partition of $\mathcal{S}(v)$ by at most  $(33Kd)^d$ families inside each of which  the members are apart from each other with a distance at least $\eta_m |I_t(v)|$.  The proof uses the same ideas as the Besicovitch's covering lemma \cite{mattila}.

 \medskip

 Assume that we are given as many as possible ``baskets" $\{{\mathcal{F}}_i\}_{i\ge 1}$. We will put the elements in $\mathcal{S}$ into 
 these baskets one by one in the following way. At first we arrange the elements in $\mathcal{S}$ according to their diameters in a decreasing order, denoted by $L_1,L_2,\ldots$. The process begins as follows.

Put $L_1$ to $\mathcal{F}_1$. If $L_2$ lies apart from $L_1$ with a distance larger than $\eta_m|I_t(v)|$, put $L_2$ into $\mathcal{F}_1$, otherwise put it into $\mathcal{F}_2$. Assume that $L_1, L_2,\ldots, L_k$ have already been put into a finite number of  baskets denoted by  $\mathcal{F}_1,\ldots, \mathcal{F}_{i_0}$.  If exist, choose the smallest integer $1\leq i\leq i_0$ such that  $L_{k+1}$  lies apart from every element in   $\mathcal{F}_{i}$ with a distance larger than $\eta_m|I_t(v)|$, and  then put $L_{k+1}$ into $\mathcal{F}_{i}$. If such an integer $i$ does not exist, put $L_{k+1}$ into the new basket $\mathcal{F}_{i_0+1}$. In this way, we give a partition of $\mathcal{S}$.

Further we prove that  at most $\kappa=(33Kd)^d$ many baskets are used. Assume on the contrary that the baskets $\mathcal{F}_1,\ldots,\mathcal{F}_{\kappa}, \mathcal{F}_{\kappa+1}$ are all nonempty. Let $L$ be the first element put into $\mathcal{F}_{\kappa+1}$. By the process above, at this moment, the first $\kappa$ baskets $\{\mathcal{F}_i, 1\le i\le \kappa\}$ are all nonempty and the diameters of the elements in these baskets are all greater than that of $L$. Moveover, since $L$ is put into a new basket $\mathcal{F}_{\kappa+1}$, for each $1\le i\le \kappa$, there exists an element denoted by $I_{t+m}(v, w_i)\in \mathcal{F}_i$ lying within a distance less than $\eta_m|I_t(v)|$ from $L$.

Write $r=|L|\ge \eta_m |I_t(v)|$ and $L\subset B(x_0,r)$ for some $x_0\in X$. Then $$
I_{t+m}(v, w_i)\cap B(x_0,2r)\ne \emptyset, \ \ {\text{for all}}\ 1\le i\le \kappa.
$$
Fix one integer $1\le i\le \kappa$. We will construct a ball inside $I_{t+m}(v, w_i)$ lying close to the ball $B(x_0,2r)$.

Let $r_1=1/4 |\phi'_{v,w_i}|^{-1} r.$ Since $r=|L|\le |I_{t+m}(v,w_i)|$, $r_1\le 1/4$. Let $y_0\in [0,1]^d$ such that $$
\phi_{v,w_i}(y_0)\in I_{t+m}(v, w_i)\cap B(x_0,2r).
$$
Since $r_1$ is small, there is enough room for us to find a cube $C_i$ in $(0,1)^d$ with sidelength $r_1$ and one vertex $y_1$ lying within a distance $\le r_1$ from $y_0$. On one hand, by (\ref{7}), the set $\phi_{v,w_i}(C_i)$ contains a ball $B_i$ with radius $\ge \frac{1}{8K}r$ inside $\phi_{v,w_1}((0,1)^d)$. On the other hand, this ball $B_i$ lies close to $B(x_0,2r)$ in the following sense: for any $y\in C_i$,
\begin{eqnarray*}
|\phi_{v,w_i}(y)-x_0|
&\le& |\phi_{v,w_i}(y)-\phi_{v,w_i}(y_1)|+|\phi_{v,w_i}(y_1)-\phi_{v,w_i}(y_0)|+|\phi_{v,w_i}(y_0)-x_0|\\
&\le & \sqrt{d} |\phi'_{v,w_i}| r_1+ |\phi'_{v,w_i}| r_1+2r
\\ &= & \sqrt{d} r/4+ r/4+2r\le 4dr.
\end{eqnarray*}
As a result, $B_i\subset B(x_0, 4dr).$

Finally, for each $1\le i\le \kappa$, we have constructed a ball $B_i$ with radius $\ge \frac{1}{8K}r$ inside $\phi_{v,w_1}((0,1)^d)$ and contained in $B(x_0,4dr)$. By the open set condition, all these balls $\{B_i: 1\le i\le \kappa\}$ are disjoint. Thus, a simple volume argument yields that
\begin{align*}
\kappa\cdot (\frac{1}{8K}r)^d {\rm{Vol}}(B(0,1))\le  \sum_{i=1}^{\kappa}{\rm{Vol}}(B_i)\le {\rm{Vol}}(B(x_0,4dr))\le (4dr)^d {\rm{Vol}}(B(0,1)).
\end{align*} This follows that $\kappa \le (32d K)^d$, a contradiction.

\medskip

The latter proves that  the cylinders $\mathcal{S}(v)$ can by divided into at most $(33Kd)^d$ families $(\mathcal{F}_i)$ of pairwise disjoint balls.
Hence, at least for one family $ \mathcal{F}_i$, one necessarily has
\begin{align*}
\sum_{  w\in \Lambda^m: I_{t+m}(v,w)\in \mathcal{F}_i}   & \left(\big|(T^m)'([w])\big|^{-1} \!\cdot\! e^{-S_mf([w])}\right)^{s_m(f)} \\
& \ \ \ \ \ \ \geq    \frac{1}{(33Kd)^d} \sum_{w\in \Lambda^m}\left(\big|(T^m)'([w])\big|^{-1} \!\cdot \! e^{-S_mf([w])}\right)^{s_m(f)}
\geq   \frac{1}{(33Kd)^d},
\end{align*}
where the definition \eqref{defsn} of $s_m(f)$ has been used.  This proves the lemma by choosing $\tilde \eta =\frac{1}{(33Kd)^d}$.
\end{proof}

\subsection{The Cantor subset}

We define a Cantor subset $\F_{\infty}$  of $R_2(f)$ level by level.
Recall  that necessarily $s(f)>0$ due to our assumption (H2). From another point of view, there is nothing need to be proven when $s(f)=0$, since the dimension of $R_2(f)$ is always bounded from above by $s(f)$ (see Section \ref{section4}).
\medskip

  Now, fix a small positive number $$0< \ep<  \frac{1}{2}\min( s(f), - \log\rho ),$$
  where $\rho$ is given by \eqref{defrho}. Then choose a large integer $m$ such that for any $n\ge m$, the following four conditions are fulfilled:
\begin{itemize}
\item  for every  word $w$ of length $n$ and every $x, y\in I_n(w)$,
\begin{equation}\label{ff4}
|S_nf(x)-S_nf(y)|\le \ep n,\end{equation}
followed by the tempered distortion property of $f$.

\item
one has
\begin{equation}\label{ff3}
|s_m(f)-s(f)|<    \ep.
\end{equation}
followed by Proposition \ref{p3}.

\item for every $w\in \Lambda^m$,
\begin{equation}
\label{ff3bis}
\left(\big|(T^m_w)'([w])\big|^{-1} \cdot e^{-S_mf([w])}\right)^{\frac{4\ep}{-\log \rho}} \le  K^{-2} e^{-3m\ep}.
\end{equation}

\item $m$ is so large that
\begin{equation}
\label{ff10}
e^{m\ep} \geq K^2.
\end{equation}

\end{itemize}

 Observe that the third inequality  can be realized since $\big|(T^m_w)'([w])\big|^{-1} \leq \rho^m$ and $f$ is positive. More precisely,
 $$\left(\big|(T^m_w)'([w])\big|^{-1} \cdot e^{-S_mf([w])}\right)^{\frac{4\ep}{-\log \rho}}  \leq (\rho^m)^{\frac{4\ep}{-\log \rho}} \leq  e^{-4m\ep} .$$

 \medskip

From now on the integer $m$ is fixed ensuring that (\ref{ff4}), (\ref{ff3}), \eqref{ff3bis} and (\ref{ff10}) are all fulfilled.

\subsubsection{Level 1 of the Cantor set}
 Let $n_0 \geq 1$ be an integer such that $ (2K)^{-1}\eta^{-n_0}\ge 2$, $t_0=1$ and let $m_1$ be a multiple of $m$ such that
$$
m_1 \ge n_{0}+t_{0} \ \ \mbox{ and } \ \  n_{0}+t_{0}\|f\|_{\infty}  \le  m_1  \ep.
$$
We write
$$m_1=\ell_1 m.$$

 Applying  Lemma \ref{l4} to $v=\emptyset$ gives us    a subfamily $\Gamma(\emptyset)\subset \Lambda^m$ of words such that the cylinders $\{I_m(w_1), w_1\in \Gamma(\emptyset)\}$ are far away from each other, and satisfy \eqref{minimi}.
Then, we define a sequence of sub-cylinders of $v=\emptyset$: First let $\Gamma_1=\Gamma(\emptyset)$ and define
$$
\F_1^{(1)} =\Big\{I_{m}( w_1): w_1\in \Gamma_1\Big\}.
$$

Assume that for every  $1\leq i \leq \ell $,  a finite set of words  $\Gamma_i $ of  length $m$ has been constructed, and  that  the collection  of cylinders of order $\ell m$
$$
\F_1^{(\ell )} =\Big\{I_{ \ell m}( w_1, \ldots, w_{\ell }):  \mbox{ for every $1\leq i\leq\ell $, } w_i\in \Gamma_ i\Big\}
$$
 has been defined.
Then for each word $v=(w_1,\ldots,w_\ell)\in \prod_{i=1}^{\ell}\Gamma_i$ of length $\ell m$, applying Lemma \ref{l4} to $v $ gives us  a family $\Gamma_{\ell+1}$ of words of length $m$ (i.e. $\Gamma_{\ell+1} \subset  \Lambda^m$). Then one sets
\begin{align*}
\F_1^{(\ell+1)}  =\Big\{I_{ (\ell+1)m}(  w_1, \ldots, w_{\ell}, w_{\ell+1}):  \mbox{ for every $1\leq i\leq \ell+1$, } w_i\in \Gamma_ i\Big\}.\end{align*}
Iterating this procedure until $\ell=\ell_1$, we obtain the collection of cylinders of order $\ell_1 m=m_1$ as \begin{align*}
\F_1^{(\ell_1)}  =\Big\{I_{ m_1}(  w_1, \ldots, w_{\ell_1}):  \mbox{ for every $1\leq i\leq \ell_1$, } w_i\in \Gamma_ i\Big\}.
\end{align*}

\medskip

Further, for each $(w_1,\ldots,w_{\ell_1})$ with $w_i\in \Gamma_i$ ($1\le i\le \ell_1$), we apply Lemma \ref{l2} to the word
 $$w^{(1)}=( w_1,\ldots, w_{\ell_1}  ) \ \ {\text{and}}\  r=e^{-S_{m_1}f([w^{(1)}])},$$
 to get  a word  $w^{(1, *)}$ of length $t_1$, satisfying the conditions of Lemma \ref{l2}, i.e.
\begin{eqnarray*}
 I_{m_1+t_1}(w^{(1)}, w^{(1,*)})&\subset  & J_{m_1}(w^{(1)})\nonumber,\\
\big|I_{m_1+t_1}(w^{(1)}, w^{(1,*)})\big|&\ge &   K^{-1} \eta\big|I_{m_1}(w^{(1)})\big|\cdot e^{-S_{m_1}f([w^{(1)}])}.
\end{eqnarray*}
Recall the definition \eqref{defjn} of $J_n(w)$.


Finally, let us set $n_1=m_1+t_1$ and define  $$
\F_{1} =\Big\{I_{n_1}(  w^{(1)}, w^{(1, *)}):  w^{(1)}=(w_1,\cdots, w_{\ell_1}) \mbox{ and    $\forall \,1\leq j\leq \ell_1$, } w_j\in \Gamma_ j\Big\},
$$ a collection of cylinders, and
 $$
\G_{1} =\Big\{(  w^{(1)}, w^{(1, *)}):  w^{(1)}=(w_1,\cdots, w_{\ell_1}) \mbox{ and    $\forall \,1\leq j\leq \ell_1$, } w_j\in \Gamma_ j\Big\},
$$ a collection of words corresponding to the cylinders in $\F_1$.
Both of them are called the first level of the Cantor set if no confusions arise.

\begin{rem} Note that the family $\Gamma_i$ depends on the previous families $\Gamma_1,\cdots,\Gamma_{i-1}$.
For simplicity, we don't emphasis this dependence in notation.

\end{rem}

\begin{rem}  Pay attention to the fact that the length of $w^{(1,*)}$ (i.e. the integer  $t_1$) depends on $w^{(1)}$ in Lemma \ref{l2}. Thus for different words $w^{(1)} $, the integer $n_1$ may be different. We omit the dependence in the notation for clarity, but one needs to keep that property in mind. Nevertheless, there is a uniform upper bound for these integers $t_1$. More precisely, by the choice of $t_1$ in Lemma \ref{l2} we have $$
e^{-m_1 \|f\|_{\infty} }\le e^{-S_{m_1  }f([w^{(1)}])}\le |I_{t_1-1}\big(({w}^{(1)})^{\infty}\big)|\le K \rho^{t_1-1}.
$$This yields $$
t_1\le - m_1  \|f\|_\infty \log \rho +1.
$$
\end{rem}

\subsubsection{$k$-th level of the Cantor set}

Assume that the $(k-1)$th level $\F_{k-1}$, a collection of cylinders, and simultaneously $\G_{k-1}$, a collection of words corresponding 
to the cylinders in $\F_{k-1}$,  of the Cantor set have been constructed.

We can choose $m_k$ such that  $m_k$ is a multiple of $m$ so large that if
 \begin{equation}\label{ff9bis}
\widetilde n_{k-1} := \max\{n_{k-1} : n_{k-1} \mbox{ is associated with the order of a cylinder in} \ \F_{k-1}  \},
\end{equation}
then
\begin{equation}
\label{ff9}
m_k/k \ge \widetilde  n_{k-1}  \ \ \mbox{ and } \ \  \ \widetilde n_{k-1}(1+\|f\|_\infty)\le m_k \ep.
\end{equation}
We write
$$m_k = m \ell_k.$$

Let $\ep^{(k-1)}$ be a word in $\G_{k-1}$, and    $I_{n_{k-1} }(\ep^{(k-1)}) $ the corresponding cylinder in $\F_{k-1}$.

\medskip

We start by applying Lemma \ref{l4} to the word $v=\ep^{(k-1)}$ to  get a subfamily $\Gamma_1(\ep^{(k-1)})\subset \Lambda^m$ satisfying the conditions of that lemma. Further, one sets
$$
\F_k^{(1)}\Big( \ep^{(k-1)}\Big)=\Big\{I_{n_{k-1}+ m}(\ep^{(k-1)}, w_1): w_1\in \Gamma_1 (\ep^{(k-1)}) \Big\}.
$$
As we did for the first step of the construction of the Cantor set, we assume that the  finite sets of words  $(\Gamma_i(\ep^{(k-1)}))_{i=1,...,\ell}$ of  length $m$ have been constructed, and  that  the collection  of cylinders of order $n_{k-1}+\ell m$
$$
\F_k^{(\ell )}  \Big( \ep^{(k-1)}\Big) =\Big\{I_{ n_{k-1}+ \ell m}( \ep^{(k-1)},w_1, \ldots, w_{\ell }):  \mbox{$\forall\, 1\leq i\leq\ell $, } w_i\in \Gamma_ i (\ep^{(k-1)}) \Big\}
$$
 has been defined.
Then for each word $v=(\ep^{(k-1)},w_1,\ldots,w_\ell)$ with $(w_1,\ldots, w_{\ell})\in \prod_{i=1}^{\ell}\Gamma_i(\ep^{(k-1)})$
of length $n_{k-1}+\ell m$, applying Lemma \ref{l4} to $v $ gives us  a family $\Gamma_{\ell+1}(\ep^{(k-1)})$ of words of length $m$, and one defines
\begin{eqnarray*}
 \F_k^{(\ell+1)} \big(\ep^{(k-1)}\big) = \Big\{I_{ n_{k-1}+(\ell+1)m}( \ep^{(k-1)}, w_1, \ldots,  w_{\ell+1}):  \mbox{ $\forall\, 1\leq i\leq \ell+1$, } w_i\in \Gamma_ i(\ep^{(k-1)})\Big\}.
\end{eqnarray*}
Iterating this procedure until $\ell=\ell_k$, we obtain the collection of cylinders of order $n_{k-1}+m_k$
\begin{eqnarray*}
 \F_k^{(\ell_k)} \big(\ep^{(k-1)}\big)  =\Big\{I_{ n_{k-1}+m_k}( \ep^{(k-1)}, w_1, \ldots, w_{\ell_k}   ):  \mbox{ $\forall\, 1\leq i\leq \ell_k$, } w_i\in \Gamma_ i(\ep^{(k-1)})\Big\}.
\end{eqnarray*}

%

\begin{rem}\label{remark}
\label{remdist}
Observe that by Lemma \ref{l4} any two different cylinders $$I_{  n_{k-1}+\ell m}( \ep^{(k-1)}, w_1,\ldots, w_{\ell-1}, w_{\ell}) \ {\text{and}} \ I_{  n_{k-1}+\ell m}( \ep^{(k-1)}, w_1,\ldots, w_{\ell-1}, w'_{\ell})$$ in $ \F_k^{(\ell)} \big(\ep^{(k-1)}\big)  $ are separated by a distance at least $\eta_m  \big| I_{  n_{k-1}+m(\ell-1)}( \ep^{(k-1)}, w_1,\ldots, w_{\ell-1})  \big|$.
\end{rem}

 Next, for every word $\ep^{(k-1)}$ and every $(w_1,\ldots,w_{\ell_k})$ with $w_i\in \Gamma_i(\ep^{(k-1)})$ ($1\le i\le \ell_k$), applying Lemma \ref{l2} to
\begin{equation}
\label{defwk}
w^{(k)}=(\ep^{(k-1)}, w_1,\ldots, w_{\ell_k}) \ \ \  {\text{and}} \ \ \  r=e^{-S_{n_{k-1}+m_k}f([w^{(k)}])}
\end{equation}
 gives us a word   $w^{(k, *)}$ of length $t_k$ satisfying the conditions of Lemma \ref{l2}, i.e.
\begin{eqnarray}
 I_{  n_{k-1}+m_k +t_k}(w^{(k)}, w^{(k,*)})&\subset  & J_{n_{k-1}+m_k}(w^{(k)})\label{b5},\\
\big|I_{n_{k-1}+m_k+t_k}(w^{(k)}, w^{(k,*)})\big|&\ge &   K^{-1} \eta\big|I_{n_{k-1}+m_k}(w^{(k)})\big|\cdot e^{-S_{n_{k-1}+m_k}f([w^{(k)}])}.\nonumber
\end{eqnarray}

Finally we introduce
  $$
\G_{k}\Big( \ep^{(k-1)} \Big)=\left\{(w^{(k)}, w^{(k, *)}): \ \begin{cases}  \  w^{(k)}=(\ep^{(k-1)}, w_1,\cdots, w_{\ell_k}) \\   \mbox{ with }w_i\in \Gamma_i ( \ep^{(k-1)}),    \ 1\le i\le \ell_k \end{cases}\right\},
$$ and
  $$
\F_{k}\Big( \ep^{(k-1)} \Big)=\left\{I_{n_{k-1}+m_k+t_k}(w^{(k)}, w^{(k, *)}): (w^{(k)}, w^{(k, *)})\in \G_k\Big( \ep^{(k-1)} \Big)\right\}.
$$

\begin{dfn}
The $k$th level of the Cantor set is defined as
$$
\F_k= \bigcup_{    \ep^{(k-1)} \in \G_{k-1}}   \F_{k}\Big( \ep^{(k-1)} \Big), \ \ \G_k= \bigcup_{    \ep^{(k-1)} \in \G_{k-1}}   \G_{k}\Big( \ep^{(k-1)} \Big).
$$\end{dfn}
%

\begin{rem}
 As we noticed for $\G_1$, it is important to remember that we omit some dependence in our notations. Every family of words $ \Gamma_\ell(\ep^{(k-1)})$ depends on the proceeding families  $ \Gamma_i(\ep^{(k-1)})$, $i\leq \ell-1$. Similarly, the integer $t_k$ depends on $w^{(k)}$.

\smallskip

As before, there is a uniform upper bound for the  integers $t_k$. By Lemma \ref{l2} and our choice \eqref{defwk} one has
$$
e^{-(n_{k-1}  +m_k) \|f\|_{\infty} }\le e^{-S_{n_{k-1}  +m_k }f([w^{(k)}])}\le |I_{t_k-1}\big(({w}^{(k)})^{\infty} \big)|\le K \rho^{t_k-1},
$$
which gives
$$
t_k\le - (n_{k-1}+m_k) \|f\|_\infty \log \rho +1.
$$
\end{rem}

\begin{rem}
Observe that, although the words of $\G_k$ do not have the same length, our choices \eqref{ff9bis} and \eqref{ff9} impose that for every $k\geq 2$, words belonging to $\G_k$ have lengths greater than all the words of $\G_{k-1}$.
\end{rem}

\subsubsection{The Cantor set, and its first property}

\begin{dfn}
The Cantor set $\F_{\infty}$ is defined as
 $$
%
\F_{\infty}=\bigcap_{k\ge 1}   \  \  \bigcup_{ I_{n_k}(\ep^{(k)}) \in \F_k}   \  \ I_{n_k}(\ep^{(k)})=\bigcap_{k\ge 1}   \  \  \bigcup_{ \ep^{(k)} \in \G_k}   \  \ I_{n_k}(\ep^{(k)}).
$$
\end{dfn}

Note that each word in $\G_{k-1}$ is the prefix of some word in $\G_k$. So we define $\G_{\infty}$ as the limit of the sequence of the families $\{\G_k\}_{k\ge 1}$. Then each word in $\G_{\infty}$ can be expressed as $$
[w^{(1)}_1, \cdots,w^{(1)}_{\ell_1}, w^{(1, *)},  w^{(2)}_1, \cdots,w^{(2)}_{\ell_2},w^{(2, *)},    \cdots, w_1^{(k)},\cdots, w_{\ell_k}^{(k)},w^{(k, *)},\cdots].
$$
We also write $\G_{\infty}$ formally as $$\G_{\infty}=\bigcap_{k\ge 1}   \  \  \bigcup_{ \ep^{(k)} \in \G_k}   \  \ \ep^{(k)}.
$$

 The first lemma shows that the set $\F_\infty$ is sitting on the right set of points.

\begin{lem}
One has
$\F_{\infty}\subset {R_2}(f)$.
\end{lem}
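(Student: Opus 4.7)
The plan is to unwind the construction of $\mathcal{F}_\infty$ step by step and observe that the defining property of $R_2(f)$ has been baked into the Cantor set at every level by Lemma \ref{l2}.

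First I would fix an arbitrary $x\in \mathcal{F}_\infty$. By the very definition of $\mathcal{F}_\infty$ as $\bigcap_{k\geq 1} \bigcup_{\epsilon^{(k)}\in\mathcal{G}_k} I_{n_k}(\epsilon^{(k)})$, there is, for each $k\geq 1$, some $\epsilon^{(k)}\in\mathcal{G}_k$ with $x\in I_{n_k}(\epsilon^{(k)})$. By construction of $\mathcal{G}_k$, this word is of the form $\epsilon^{(k)}=(w^{(k)},w^{(k,*)})$, where $w^{(k)}$ has length $n_{k-1}+m_k$ and $w^{(k,*)}$ has length $t_k$, so that $n_k=n_{k-1}+m_k+t_k$.

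Next I would invoke the inclusion \eqref{b5}, which was obtained as a direct application of Lemma \ref{l2} to $w^{(k)}$ and $r=e^{-S_{n_{k-1}+m_k}f([w^{(k)}])}$:
\[
I_{n_{k-1}+m_k+t_k}(w^{(k)},w^{(k,*)})\ \subset\ J_{n_{k-1}+m_k}(w^{(k)}).
\]
Setting $p_k:=n_{k-1}+m_k$, this inclusion gives $x\in J_{p_k}(w^{(k)})$, that is, $w^{(k)}\in\Lambda^{p_k}$ and
\[
|T^{p_k}x-x|\ <\ e^{-S_{p_k}f([w^{(k)}])},
\]
which is exactly the event defining the inner union in \eqref{defr2} at the level $n=p_k$.

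Finally I must check that the sequence $(p_k)_{k\geq 1}$ is unbounded, so that the property "for infinitely many $n$" in the definition of $R_2(f)$ really holds. This is where the growth conditions \eqref{ff9bis}--\eqref{ff9} come in: they force $m_k\geq k\,\widetilde n_{k-1}$, hence $m_k\to\infty$, and since $p_k\geq m_k$, also $p_k\to\infty$. Thus for every $N\geq 1$ there are infinitely many indices $k$ with $p_k\geq N$, giving $x\in\bigcup_{n\geq N}\bigcup_{w\in\Lambda^n}J_n(w)$. Intersecting over $N$ yields $x\in R_2(f)$, as desired.

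The argument is therefore a pure bookkeeping verification; no real obstacle is expected since each level $\mathcal{F}_k$ was designed precisely to produce one new index $n=p_k$ witnessing membership in $R_2(f)$. The only subtle point, and the one I would underline in the write-up, is the mild check that $p_k\to\infty$, so that one is not merely recovering a finite number of recurrence instances.
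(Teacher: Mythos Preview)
Your proof is correct and follows exactly the paper's approach: the paper's proof is a one-liner invoking \eqref{b5} to conclude that every $x\in\F_\infty$ lies in infinitely many $J_n(w)$, and you have simply unpacked this in full detail, including the explicit verification that $p_k\to\infty$.
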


\begin{proof}
This is obvious since, by \eqref{b5}, any point $x\in \F_\infty$ belongs to an infinite number of cylinders $J_n(w)$.
\end{proof}

The next lemma says that a point $x\in \F_{\infty}$ may have multiple representations but it corresponds only one word in $\G_{\infty}$.
\begin{lem}\label{unique}
For any $x\in \F_{\infty}$, there exists a unique word $\ep^{(\infty)}\in \G_{\infty}$, such that $x=\pi(\ep^{(\infty)})$.
\end{lem}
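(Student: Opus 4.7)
The plan is to show that at every finite level $k\ge 1$ of the Cantor construction, the cylinders making up $\F_k$ are pairwise disjoint. Once this is known, any point $x\in \F_\infty$ determines a unique nested sequence $(\ep^{(k)})_{k\ge 1}$ of words in $\G_k$ whose limit is the desired unique element of $\G_\infty$ coding $x$.

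The key separation claim, proved by induction on $k$, is: \emph{any two distinct cylinders appearing in $\F_k$ are separated by a strictly positive distance, and in particular are disjoint.} For the base case $k=1$, I invoke Lemma \ref{l4} at each of the $\ell_1$ successive refinements producing $\F_1^{(1)}, \ldots, \F_1^{(\ell_1)}$: if two words $(w_1,\ldots, w_{\ell_1})$ and $(w'_1,\ldots, w'_{\ell_1})$ first differ at some index $\ell$ (so that the family $\Gamma_\ell$ is common to both), then the corresponding cylinders of order $\ell m$ are separated by at least $\eta_m$ times the diameter of their common parent cylinder of order $(\ell-1) m$; the final cylinders of $\F_1$ inherit this separation because they are subcylinders. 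For the inductive step I distinguish two cases. Cylinders in $\F_k$ sitting under distinct parents $\ep^{(k-1)}\ne \ep'^{(k-1)}$ in $\G_{k-1}$ are automatically disjoint, since the parents are disjoint by the induction hypothesis and each refined cylinder is contained in its parent via the inclusion $I_{n+t}(vw^*)\subset I_n(v)$ used in Lemma \ref{l2}. Cylinders in $\F_k$ sitting under the same parent $\ep^{(k-1)}$ are handled exactly as in the base case, by invoking Remark \ref{remdist} at the refinement step where the $w^{(k)}$-parts first differ.

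Granted the separation claim, the conclusion is immediate. Given $x\in \F_\infty$, at each level $k$ there is a unique cylinder $I_{n_k}(\ep^{(k)})\in \F_k$ containing $x$; by construction of $\F_k$ from $\F_{k-1}$, the word $\ep^{(k-1)}$ is a prefix of $\ep^{(k)}$, so the increasing sequence $(\ep^{(k)})_{k\ge 1}$ defines a unique infinite word $\ep^{(\infty)}\in \G_\infty$. Since $x\in I_{n_k}(\ep^{(k)})\subset \phi_{\ep^{(k)}}([0,1]^d)$ for every $k$, and $|\phi_{\ep^{(k)}}([0,1]^d)|\le \sqrt{d}\,\rho^{n_k}\to 0$ by the uniform contractivity \eqref{defrho}, the point $x$ lies in the intersection of the nested family $(\phi_{\ep^{(\infty)}|_n}([0,1]^d))_{n\ge 1}$, which is exactly $\{\pi(\ep^{(\infty)})\}$; hence $x=\pi(\ep^{(\infty)})$. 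The uniqueness of $\ep^{(\infty)}$ is inherited from the uniqueness of the containing cylinder at each level.

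The only genuine subtlety is to observe that within $\F_k(\ep^{(k-1)})$ different final cylinders automatically correspond to distinct prefixes $w^{(k)}$: the suffix $w^{(k,*)}$ and its length $t_k$ are deterministic functions of $w^{(k)}$ via Lemma \ref{l2}, so the positive separation obtained at the intermediate step of order $n_{k-1}+m_k$ is faithfully inherited by the deeper final cylinders of order $n_{k-1}+m_k+t_k$ that make up $\F_k$.
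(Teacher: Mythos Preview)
Your proof is correct and follows essentially the same approach as the paper's own argument: both rest on the observation that distinct cylinders at a given level of the Cantor construction are well separated, splitting into the two cases of distinct versus common parent in $\G_{k-1}$ and invoking Remark \ref{remdist} accordingly. Your write-up simply supplies the details the paper leaves implicit, including the useful remark that $w^{(k,*)}$ is determined by $w^{(k)}$, so the separation established at order $n_{k-1}+m_k$ passes to the deeper cylinders of $\F_k$.
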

\begin{proof}
This follows from the fact that in the construction of $\F_{\infty}$, the cylinders of a fixed generation of the Cantor set are well separated.  Indeed,  if $\ep_1^{(k)}$ and  $\ep_2^{(k)}$ belong to $\G_k$, either they have different  ``father" cylinders $ \ep_1^{(k-1)}$  and  $\ep_2^{(k-1)}$,  and Remark \ref{remdist} applies to the distance between cylinders of generation $k-1$, or they have the same father cylinder $\ep^{(k-1)}$ and    Remark \ref{remdist} applies with cylinders of generation $k$.
\end{proof}

At last, we give a notation: \begin{itemize}\item For each $\ep^{(\infty)}\in \G_{\infty}$ and $n\ge 1$, call $I_n(\ep^{(\infty)})$ a {\em basic cylinder} of order $n$.

\item For each $x\in \F_{\infty}$, if it corresponds to $\ep^{\infty}\in \G_{\infty}$, the cylinder containing $x$ is chosen to be the cylinder $I_n(\ep^{(\infty)})$, i.e. $I_n(x):=I_n(\ep^{(\infty)})$.
\medskip
\end{itemize}


\subsection{Supporting measure}

For any finite word $v\in \Lambda^*$, let $\Gamma(v)$ be the subfamily of $\Lambda^m$ chosen in Lemma \ref{l4} with respect to the word $v$. Then define $s=s_{m,v}$   as the  unique solution to the equation
 \begin{equation}\label{1.1}
\sum_{w\in \Gamma(v)}\left(\big|(T^m)'([w])\big|^{-1}\cdot e^{-S_mf([w])}\right)^s=1.
\end{equation}
We omit the dependence in $f$ for $s_{m,v}$ for brevity.

We will use this equality to spread the mass of a cylinder $I_n(v)$ to some of its sub-cylinders $I_{n+m}(vw)$.

\begin{lem}
\label{l5}
There exists a constant  $C$ independent of $\ep$ and $v$ such that if $s_{m,v}$ is defined by \eqref{1.1}, then
$$ 0\leq s_m(f) - s_{m,v} \leq  C \ep.$$
\end{lem}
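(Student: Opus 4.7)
The plan is to prove both inequalities separately and set, for each word $w\in \Gamma(v)$,
$$a_w := \big|(T^m)'([w])\big|^{-1}\cdot e^{-S_m f([w])},$$
so that the defining equation \eqref{1.1} reads $\sum_{w\in \Gamma(v)} a_w^{s_{m,v}}=1$ and Lemma~\ref{l4} says $\sum_{w\in \Gamma(v)} a_w^{s_m(f)}\geq \tilde\eta$. The lower bound $s_{m,v}\leq s_m(f)$ follows from a pure monotonicity argument. Since $|(T^m)'([w])|^{-1}\leq \rho^m<1$ and $f\geq 0$, every $a_w$ lies in $(0,1)$, so $s\mapsto \sum_{w\in \Gamma(v)} a_w^s$ is strictly decreasing. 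As $\Gamma(v)\subset \Lambda^m$ and $\sum_{w\in\Lambda^m} a_w^{s_m(f)}=1$ by the definition \eqref{defsn}, we have
$$\sum_{w\in \Gamma(v)} a_w^{s_m(f)} \leq 1 = \sum_{w\in \Gamma(v)} a_w^{s_{m,v}},$$
which, by monotonicity, forces $s_m(f)\geq s_{m,v}$.

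For the upper bound I would set $\delta := \frac{4\epsilon}{-\log \rho}$ and invoke inequality \eqref{ff3bis}, which was calibrated exactly for this purpose: it reads $a_w^\delta \leq K^{-2} e^{-3m\epsilon}$ uniformly in $w$, so $a_w^{-\delta}\geq K^2 e^{3m\epsilon}$. Combining this with Lemma~\ref{l4},
$$\sum_{w\in \Gamma(v)} a_w^{s_m(f)-\delta} = \sum_{w\in \Gamma(v)} a_w^{s_m(f)}\cdot a_w^{-\delta} \geq K^2 e^{3m\epsilon} \sum_{w\in \Gamma(v)} a_w^{s_m(f)} \geq \tilde\eta\, K^2 e^{3m\epsilon}.$$
The condition \eqref{ff10} (and, if necessary, a harmless enlarging of $m$ at the outset so that $e^{m\epsilon}$ also dominates $\tilde\eta^{-1/3}$) guarantees that the right-hand side is at least $1$. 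Using the same monotonicity as in the lower bound, this implies $s_{m,v}\geq s_m(f)-\delta$, i.e.
$$s_m(f) - s_{m,v}\leq \delta = \frac{4}{-\log\rho}\cdot\epsilon,$$
which is of the required form with $C=4/|\log\rho|$, a constant depending only on the system and in particular independent of $\epsilon$ and $v$.

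There is no real analytic obstacle: once Lemma~\ref{l4} is available, both sides reduce to the monotonicity of the one-parameter family $s\mapsto \sum_{w\in\Gamma(v)} a_w^s$, and the technical bookkeeping is hidden in the a priori choice of $m$ via \eqref{ff3bis}--\eqref{ff10}. The only subtlety worth flagging is that the exponent $\delta=4\epsilon/(-\log\rho)$ must be small enough so that $s_m(f)-\delta$ is still a legitimate exponent at which to compare the two sums; this is harmless because $s_m(f)\to s(f)>0$ by Proposition~\ref{p3}, while $\delta\to 0$ as $\epsilon\to 0$.
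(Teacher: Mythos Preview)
Your proof is correct and uses the same ingredients as the paper's: the lower bound $\sum_{w\in\Gamma(v)} a_w^{s_m(f)}\geq\tilde\eta$ from Lemma~\ref{l4} together with the size estimate \eqref{ff3bis}. The paper organizes the upper bound slightly differently---rather than showing $\sum_{w\in\Gamma(v)} a_w^{s_m(f)-\delta}\geq 1$ and invoking monotonicity, it writes $1=\sum_{w\in\Gamma(v)} a_w^{s_{m,v}}\geq\tilde\eta\cdot\min_w a_w^{s_{m,v}-s_m(f)}$ to obtain $\max_w a_w^{s_m(f)-s_{m,v}}\geq\tilde\eta$ and then applies \eqref{ff3bis}, which sidesteps the extra constraint on $m$ you needed; your version, on the other hand, makes the inequality $s_{m,v}\leq s_m(f)$ explicit whereas the paper leaves it implicit.
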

\begin{proof}
We gather the information about the parameters. Fix one  finite word $v$ and the associated family $\Gamma (v)$. Let us denote   $\alpha_w = \big|(T^m)'([w])\big|^{-1}\cdot e^{-S_mf([w])} $ for every  $ w\in \Gamma(v)$.  We see that \eqref{1.1} implies
\begin{eqnarray*}
1 = \sum_{w\in \Gamma(v)} \alpha_w^{s_{m,v}} =  \sum_{w\in \Gamma(v)} \alpha_w^{s_{m}(f)}  \alpha_w^{s_{m,v}-s_m(f)} \geq \left(    \sum_{w\in \Gamma(v)} \alpha_w^{s_{m}(f)} \right) \min \{ \alpha_w^{s_{m,v}-s_m(f)} : w\in \Gamma(v)\}.
\end{eqnarray*}
Combining the last inequality with \eqref{minimi}, it follows that
\begin{eqnarray*}
 \max \{ \alpha_w^{s_m(f) -s_{m,v}} : w\in \Gamma(v) \}  \geq \widetilde \eta.
\end{eqnarray*}
By \eqref{ff3bis}, one has
$$ \left(K^{-2} e^{-3m\ep}\right)^{\frac{-\log \rho}{ 4\ep} (s_m(f) -s_{m,v})}
  \geq \widetilde \eta.$$
  Taking logarithm of both sides, one obtains
  $$0 \leq s_m(f) -s_{m,v} \leq  \ep      \frac{ 4 \log \widetilde \eta} { -\log \rho( -2\log K -3m\ep)} \leq  \ep      \frac{ 4 \log \widetilde \eta} {  \log \rho \log K  }.
  $$
  Hence the result follows with $C =   \frac{ 4 \log \widetilde \eta} {  \log \rho \log K  } >0$.
\end{proof}

\smallskip
As we remarked after Lemma \ref{unique},
for any $x\in \F_\infty$, let $I_n(x)=I_n(\epsilon^{(\infty)})$ where $\epsilon^{(\infty)}$ is the unique word in $G_\infty$ corresponding to $x$.

\smallskip

We are now going to construct  a measure supported on $\F_{\infty}$.
We write every     $x\in \F_{\infty}$ as
\begin{align*}
x &=[w^{(1)}_1, \ldots,w^{(1)}_{\ell_1}, w^{(1, *)},  w^{(2)}_1, \ldots,w^{(2)}_{\ell_2},w^{(2, *)},    \ldots, w_1^{(k)},\ldots, w_{\ell_k}^{(k)},w^{(k, *)},\ldots]
\\ &=[\ep^{(k-1)}, w_1^{(k)},\ldots, w_{\ell_k}^{(k)},w^{(k, *)},\ldots].\end{align*}

\subsubsection{Measure of $\mu(I_n(x)) $ when $n\leq n_1$.} \
 \label{sub1}
 \medskip

(a1)  Consider a word  $(w_1^{(1)},\ldots, w_{\ell_1}^{(1)}) $   in $\prod_{\ell=1}^{\ell_1} \, \Gamma_\ell$.  For every $2\leq \ell \leq \ell_1$, let  $s_\ell^{(1)}$ be  the solution to  (\ref{1.1}) with respect to the word $v=(w_1^{(1)},\ldots, w_{\ell-1}^{(1)})$. We set $s_1^{(1)}$ as the solution to  (\ref{1.1}) with respect to the word $v=\emptyset$.

When $n=\ell m$,  for the word $\widetilde w_\ell^{(1)}=(w_1^{(1)},\ldots, w_{\ell}^{(1)}) $ of length $\ell m$,  set
 $$
\mu(I_n( \widetilde w_\ell^{(1)} ))=\prod_{i=1}^{\ell} \left(\big|(T^m)'([  w_i^{(1)}])\big|^{-1} \cdot e^{-S_mf([  w_i^{(1)}])}\right)^{s_i^{(1)}}.
$$
This definition is consistent. More precisely, let $\Gamma_{\ell+1}:=\Gamma_{\ell+1}(\widetilde{w}_{\ell}^{(1)})$ defined by Lemma \ref{l4} with respect to $\widetilde{w}_{\ell}^{(1)}$. Then
\begin{eqnarray}
\nonumber
\sum_{ w\in \Gamma_{\ell+1}} \mu(I_n(\widetilde w_\ell^{(1)},w)) & = & \sum_{ w\in \Gamma_{\ell+1}}   \prod_{i=1}^{\ell} \left(\big|(T^m)'([w_i^{(1)}])\big|^{-1} \cdot e^{-S_mf([w_i^{(1)}])}\right)^{s_i^{(1)}}\\
\nonumber
&&\ \ \ \ \ \ \ \qquad \times\left(\big|(T^m)'([w])\big|^{-1} \cdot e^{-S_mf([w])}\right)^{s_{\ell+1}^{(1)}} \\
\nonumber
& = &   \prod_{i=1}^{\ell} \left(\big|(T^m)'([w_i^{(1)}])\big|^{-1} \cdot e^{-S_mf([w_i^{(1)}])}\right)^{s_i^{(1)}}\\
\label{eqq2}&& \ \ \ \ \ \ \ \qquad\times  \sum_{ w\in \Gamma_{\ell+1}} \left(\big|(T^m)'([w])\big|^{-1} \cdot e^{-S_mf([w])}\right)^{s_{\ell+1}^{(1)}} \\
\nonumber
& = &   \prod_{i=1}^{\ell} \left(\big|(T^m)'([w_i^{(1)}])\big|^{-1} \cdot e^{-S_mf([w_i^{(1)}])}\right)^{s_i^{(1)}} = \mu(I_n( \widetilde w_\ell^{(1)} )),
\end{eqnarray}
where we used that \eqref{eqq2} is equal to 1 by the definition of $s_{\ell+1}^{(1)}$ (\ref{1.1}).

\medskip

(a2)  When $n=\ell m+i$ with $0\le \ell<\ell_1$ and $0<i<m$, the measure of a cylinder associated with a word $w$ of length $n$ is simply defined as
$$
\mu(I_n(w) )=  \sum_{ \widetilde w^{ (1)}_{\ell+1} \in  \prod_{i=1}^{\ell+1} \Gamma_{i} : \,I_{(\ell+1)m}(\widetilde w^{ (1)}_{\ell+1})  \subset I_n(w)   } \ \mu \Big (I_{(\ell+1)m}(\widetilde w^{ (1)}_{\ell+1}) \Big),
$$ i.e. the total mass of its offspring of order $(\ell+1)m$.
This ensures  the consistency of our definition of the measure $\mu$ on all cylinders associated with words of length $\leq m_1$.

\medskip

(a3)  Now fix a word   $\widetilde{w}_{\ell_1}^{(1)}=(w_1^{(1)},\ldots, w_{\ell_1}^{(1)}) $   in $\prod_{\ell=1}^{\ell_1} \, \Gamma_\ell$, and consider the longer word   $(\widetilde{w}_{\ell_1}^{(1)}, w^{(1, *)})=(w^{(1)}, w^{(1,*)})$.  For every $m_1< n\le m_1 +t_1$, the measure $\mu$ will charge only one cylinder inside $I_{m_1}(w^{(1)})$, and thus the mass will stay the same. More precisely, for every $m_1< n\le m_1 +t_1$, we set
\begin{equation*}
\mu\Big (I_{n} (w^{(1)}, w^{(1,*)}) \Big)=\mu(I_{m_1}(w^{(1)}))=\prod_{i =1}^{\ell_1} \left(\big|(T^m)'([w_i^{(1)}])\big|^{-1} \cdot e^{-S_mf([w_i^{(1)}])}\right)^{s_i^{(1)}}.
\end{equation*}

This definition is consistent, because the cylinders $I_{\ell_1 m}(\widetilde{w}^{(1)})$  with $\widetilde{w}_{\ell_1}^{(1)}\in \prod_{\ell=1}^{\ell_1} \, \Gamma_\ell$ are disjoint and well separated.

\subsubsection{Measure of $\mu(I_n(x)) $ when $ n_{k-1} < n\leq n_k$.} \

 \medskip

Now we define the measure $\mu$ inductively, by using the same method as above.

\smallskip

Assume that for every word  $\ep^{(k-1)} \in \G_{k-1}$ the measure $\mu(I_n(\ep^{(k-1)} ))$ has been defined, for all $n\leq n_{k-1}$.
We explain the way of constructing the measure on finer cylinders.

\medskip

We fix  $\ep^{(k-1)} \in \G_{k-1}$.

\medskip

(b1)  Consider a word  $(w_1^{(k)},\ldots, w_{\ell_k}^{(k)}) $   in $\prod_{\ell=1}^{\ell_k} \, \Gamma_\ell(\ep^{(k-1)})$.  For every $2\leq \ell \leq \ell_k$, let  $s_\ell^{(k)}$ be  the solution to  (\ref{1.1}) with respect to the word $v=(\ep^{(k-1)}, w_1^{(k)},\ldots, w_{\ell-1}^{(k)})$. We set $s_1^{(k)}$ as the solution to  (\ref{1.1}) with respect to the word $v=\ep^{(k-1)}$.

When $n= n_{k-1} + \ell m$,  for the word $\widetilde w_\ell^{(k)}=(w_1^{(k)},\ldots, w_{\ell}^{(k)}) $ of length $\ell m$,  set
 \begin{equation}
 \label{defmuin}
\mu \Big (I_n(\ep^{(k-1)}, \widetilde w_\ell^{(k)} ) \Big)= \mu\Big(I_{n_{k-1}}(\ep^{(k-1)}) \Big)   \prod_{i=1}^{\ell} \left(\big|(T^m)'([  w_i^{(k)}])\big|^{-1} \cdot e^{-S_mf([  w_i^{(k)}])}\right)^{s_i^{(k)}}.
\end{equation}

This definition is consistent for the exact same reason as  in Subsection \ref{sub1}.
Let $\Gamma_{\ell+1}:=\Gamma_{\ell+1}(\ep^{(k-1)}, \widetilde{w}_{\ell}^{(k)})$ defined by Lemma \ref{l4} with 
respect to $(\ep^{(k-1)}, \widetilde{w}_{\ell}^{(k)})$. Then we have
\begin{align*}
&\sum_{ w\in \Gamma_{\ell+1} } \mu\Big(I_n(\ep^{(k-1)}, \widetilde w_\ell^{(k)},w)\Big)\\ = &\mu\Big(I_{n_{k-1}}(\ep^{(k-1)}) \Big)
\times  \sum_{ w\in \Gamma_{\ell+1} }   \prod_{i=1}^{\ell} \left(\big|(T^m)'([w_i^{(k)}])\big|^{-1} \cdot e^{-S_mf([w_i^{(k)}])}\right)^{s_i^{(k)}}\\
&\ \ \ \ \ \ \ \ \ \ \ \ \ \ \ \ \ \ \ \ \ \ \ \ \ \qquad \qquad\times\left(\big|(T^m)'([w])\big|^{-1} \cdot e^{-S_mf([w])}\right)^{s_{\ell+1}^{(k)}} \\
\nonumber
= &   \mu\Big(I_{n_{k-1}}(\ep^{(k-1)}) \Big)
\times \prod_{i=1}^{\ell} \left(\big|(T^m)'([w_i^{(k)}])\big|^{-1} \cdot e^{-S_mf([w_i^{(k)}])}\right)^{s_i^{(k1)}}\\
& \ \ \ \ \ \ \ \ \ \ \ \ \ \ \ \ \ \ \ \ \ \ \ \ \ \qquad \qquad\times  \sum_{ w\in \Gamma_{\ell+1} } \left(\big|(T^m)'([w])\big|^{-1} \cdot e^{-S_mf([w])}\right)^{s_{\ell+1}^{(k)}} \\
\nonumber
= &  \mu \Big (I_n(\ep^{(k-1)}, \widetilde w_\ell^{(k)} ) \Big),
\end{align*}
where for the last equality we used the definition of $s_{\ell+1}^{(k)}$ (\ref{1.1}).
\medskip

(b2)  When $n=n_{k-1}+ \ell m+i$ with $0\le \ell<\ell_k$ and $0<i<m$, the measure of a cylinder associated with a word $w$ of length $n$ is
$$
\mu(I_n(w) )=  \sum_{ \widetilde w^{ (k )}_{\ell+1} \in  \prod_{i=1}^{\ell+1} \Gamma_{i} (\ep^{(k-1)}) : \,I_{n_{k-1}+(\ell+1)m}(\ep^{(k-1)}, \widetilde w^{ (k)}_{\ell+1})  \subset I_n(w)   } \ \mu \Big (I_{n_{k-1}+(\ell+1)m}(\ep^{(k-1)}, \widetilde w^{ (k)}_{\ell+1}) \Big),
$$ i.e. the total mass of its offspring of order $n_{k-1}+(\ell+1)m$.
This ensures  the consistency of our definition of the measure $\mu$ on all cylinders included in $I_{n_{k-1}} (\ep^{(k-1)} )$  associated with words of length $\leq n_{k-1}+m_k$.

\medskip

(b3)  It remains us to take care of the words of length between $n_{k-1}+m_k$ and $n_k$.  Fix a word   $ \widetilde w_{\ell_k}^{(k)}=(w_1^{(k )},\ldots, w_{\ell_k}^{(k )}) $   in $\prod_{\ell=1}^{\ell_k} \, \Gamma_\ell    (\ep^{(k-1)})    $, and consider the longer word   $(\ep^{(k-1)}, \widetilde w_{\ell_k}^{(k)} , w^{(k,*)}) = (w^{(k)}, w^{(k,*)})$.  For every $n_{k-1}+ m_k< n\le n_{k-1} +m_k+t_k$, the measure $\mu$ will charge only one cylinder inside $I_{n_{k-1}+m_k}(   \ep^{(k-1)} , \widetilde w_{\ell_k}^{(k)})$:  for every $n_{k-1}+m_k< n\le n_{k-1}+m_k +t_k$, we set
\begin{equation*}
\mu\Big(I_{n}(w^{(k)}, w^{(k,*)})\Big)=\mu\Big(I_{n_{k-1}+m_k}(  \ep^{(k-1)} ,\widetilde w_{\ell_k}^{(k)}) \Big).
\end{equation*}

This definition is consistent, because the cylinders associated with the words $(\ep^{(k-1)} ,\widetilde w_{\ell_k}^{(k)}) $ with $ \widetilde w_{\ell_k}^{(k)} \in \prod_{\ell=1}^{\ell_k} \, \Gamma_\ell  (\ep^{(k-1)}) $ are disjoint and well separated.

  \subsubsection{Conclusion}
\

\medskip

The measure we have built satisfies  the   Kolmogorov's Consistency Condition, as we checked it along the construction through the definitions of the mass on the disjoint cylinders of each generation $\F_k$ of the Cantor set $\F_\infty$.  Hence, it can be uniquely extended into a Borel probability measure supported on $\F_{\infty}$.

\subsection{Diameters of basic cylinders}

Recall that  {\em basic cylinders} are those $I_n(\epsilon^{(\infty)})$ with $\epsilon^{(\infty)}\in \G_{\infty}$. Now we estimate their diameters.  Write  $x\in \F_{\infty}$ as
$$
x=[w^{(1)}_1, \ldots,w^{(1)}_{\ell_1}, w^{(1, *)}, \ldots, w_1^{(k)},\ldots, w_{\ell_k}^{(k)}, w^{(k, *)},\ldots]
$$
and as before for each $k\ge 1$,  let $$
w^{(k)}=[w^{(1)}_1, \ldots,w^{(1)}_{\ell_1}, w^{(1, *)}, \ldots, w_1^{(k)},\ldots, w_{\ell_k}^{(k)}]=[\ep^{(k-1)},  w_1^{(k)},\ldots, w_{\ell_k}^{(k)}].
$$ and $n_{k-1}$ the length of the word $\ep^{(k-1)}$ and $n_{k-1}+\ell_k m$ the length of the word $w^{(k)}$.

\medskip

{\bf (l1)  When $n=n_{k} $:}  By (\ref{ff6}) and \eqref{defwk}, we have
 \begin{align*}
&|I_{n_{k}}(x)|\ge  K^{-1} \eta  |I_{n_{k-1}+m_k}(x)|\cdot e^{-S_{n_{k-1}+m_k}f([w^{(k)}])}.
\end{align*}
 In addition, one has by Proposition \ref{p0}
   \begin{eqnarray*}
  |I_{n_{k-1}+m_k}(x)|  
  & \ge &  K^{-1}  |I_{n_{k-1}+(m_k -m)}(x)| \cdot |I_{m}(w_{\ell_k}^{(k)})| \\
  & \ge &  K^{-2}  |I_{n_{k-1}+(m_k -2m)}(x)| \cdot |I_{m}(w_{\ell_{k-1}}^{(k)})| \cdot |I_{m}(w_{\ell_k}^{(k)})| \\
  &\geq& ...\\
  & \geq & K^{-\ell_k}    |I_{n_{k-1} }(x)| \cdot \prod_{\ell=1}^{\ell_k}  |I_{m}(w_{\ell}^{(k)})|.
\end{eqnarray*}
By \eqref{2.4}, one also has for every $\ell$
 \begin{eqnarray*}
 |I_{m}(w_{\ell}^{(k)})| \geq K^{-1} |(T^m)'(w_{\ell}^{(k)})|^{-1}.
 \end{eqnarray*}
Moreover, by the tempered distortion \eqref{ff4} of $f$ , we have
$$ S_{n_{k-1}+m_k}f([w^{(k)}]) \leq  n_{k-1}\|f\|_{\infty} +S_{m_k} f([w_1^{(k)},\ldots,w_{\ell_k}^{(k)}]).$$
and
\begin{align*}
\Big| S_{ \ell_k m}f([w_1^{(k)},\ldots,w_{\ell_k}^{(k)}])   -\sum_{j=1}^{\ell_k } S_mf([w_j^{(k)}]) \Big| &\le \ell_k m \ep = m_k \ep.
\end{align*}
One deduces that
 \begin{align*}
  |I_{n_{k}}(x)|
  \geq &   \eta
  K^{-2\ell_k-1  }   |I_{n_{k-1} }(x)| \cdot \left(\prod_{\ell=1}^{\ell_k}    |(T^m)'(w_{\ell}^{(k)})|^{-1}  e^{-S_mf([w_j^{(k)} ])  }\right)\cdot e^{-n_{k-1}\|f\|_{\infty}  -m_k\ep}.
  \end{align*}
Recalling \eqref{ff9} and 
\eqref{ff3bis}, (\ref{ff10}),
one has
 \begin{eqnarray*}
  |I_{n_{k}}(x)|  & \ge   &      |I_{n_{k-1} }(x)| \cdot \left(\prod_{\ell=1}^{\ell_k}    |(T^m)'(w_{\ell})|^{-1}  e^{-S_mf([w_j^{(k)} ])  }\right) ^{1+{\frac{4\ep}{-\log \rho}}}.
  \end{eqnarray*}
 Then by iteration we arrive that
   \begin{align}\label{1.2}
|I_{n_k}(x)|&\ge
\prod_{j=1}^{k}\left(\prod_{\ell=1}^{\ell_j} \big|(T^m)'( [w_\ell^{(j)}])\big|^{-1}\cdot e^{-S_mf([w_\ell^{(j)}])}\right)^{1+{\frac{4\ep}{-\log \rho}}}.
\end{align}

\medskip

{\bf (l2)  When $n=n_{k-1} +\ell' m $, with $ 1\leq \ell' \leq \ell_k$:}   By Proposition \ref{p0}, we have
$$
|I_n(x)| \ge K^{-1}|I_{n_{{k-1} }}(x)| \cdot K^{-\ell'}\prod_{\ell =1}^{\ell'} \big|(T^m)([w_\ell^{(k)}])\big|^{-1}.$$
By \eqref{ff3bis}, one has for every $w\in \Lambda^m$
$$
  K^{-1}  \geq\left(\big|(T^m_w)'([w])\big|^{-1} \cdot e^{-S_mf([w])}\right)^{\frac{4\ep}{-\log \rho}}   e^{3m\ep}.
$$
Hence,  using \eqref{ff10}, one deduces that
\begin{align}
\nonumber
|I_n(x)|   \ge &\prod_{j=1}^{{k-1} }\left(\prod_{\ell=1}^{\ell_j} \big|(T^m)'( [w_\ell^{(j)}])\big|^{-1}\cdot e^{-S_mf([w_\ell^{(j}])}\right)^{1+{\frac{4\ep}{-\log \rho}}}\\
 \label{1.3}
&\times
\left(\prod_{\ell=1}^{\ell'} \big|(T^m)'([w_\ell^{(k)}])\big|^{-1} \cdot e^{-S_mf([w_\ell^{(k)}])}\right)^{1+\frac{4\ep}{-\log \rho}}.
\end{align}

\medskip

{\bf (l3) When $n=n_{{k-1} }+  \ell' m +i$ for $0\le \ell' < \ell_k$ and $1\le i<m$:}
In this case, we need only to see that
\begin{align}\label{1.4}
\eta_m \leq \frac{|I_n(x)|}{|I_{n_{{k-1} }+\ell 'm}(x)|} \leq 1.
\end{align}


\subsection{H\"{o}lder exponent of $\mu$}

We consider the measure $\mu$ on basic  cylinders in the construction of the Cantor set $\F_\infty$.  Let $n=n_{k-1}  + \ell' m $, with  $0\le \ell' < \ell_k$.  We use \eqref{defmuin}
 to see that
 \begin{eqnarray*}
 \mu \Big (I_n(x ) \Big)  
  &  =  & \prod_{i  =1}^{{k-1}  }    \prod_{\ell =1}^{\ell_i} \left(\big|(T^m)'([  w_\ell   ^{(i)}])\big|^{-1} \cdot e^{-S_mf([  w_\ell ^{(i)}])}\right)^{s_{\ell }^{(i)}}\\
 && \qquad \qquad \times  \prod_{\ell  =1}^{\ell  '}   \left(\big|(T^m)'([  w_\ell  ^{(k)}])\big|^{-1} \cdot e^{-S_mf([  w_\ell ^{(k)}])}\right)^{s_\ell ^{(k)}}.
 \end{eqnarray*}

 We apply \eqref{ff3} and Lemma \ref{l5} to see that  every real number $s_{\ell }^{(i)}$ appearing in the above product satisfies
$$|s_{\ell }^{(i)} - s(f)|  \leq |s_{\ell }^{(i)} - s_m(f)| +|s_m(f) -s(f)| \leq (C+1)\ep.$$

By \eqref{1.2} and \eqref{1.3} and the inequality above, one gets directly that
$$
\mu\Big(I_n(x)\Big)\le  |I_n(x) | ^{  s_\ep}, \ \ \ \mbox{ where } s_\ep:=   \frac{s(f)- (C+1)\ep}{1+\frac{4\ep}{-\log \rho}}.
$$

Up to a   constant  $M$ depending on the IFS  and the integer $m$ only, due to \eqref{1.4}, if  $n=n_{k-1}  + \ell' m +i$, with  $0\le \ell' < \ell_k$ and $0 \leq i \leq m$, one has
\begin{equation}
\label{eqfinal}
\mu\Big(I_n(x)\Big)\le M  |I_n(x) | ^{    s_\ep}.
\end{equation}

Finally, for the  cylinders $I_n(x) $ with $n_{k-1} + m_k<n\leq n_{k}-1$, the inequality is obvious since
$$\mu(I_n(x)) = \mu(I_{n_k}(x) )\leq  |I_{n_k}(x) | ^{  s_\ep} \leq  |I_n(x) | ^{  s_\ep}.
$$

\medskip

We have checked that \eqref{eqfinal} holds true on the basic cylinders appearing in the construction of the Cantor set $\F_\infty$. It remains us to cheek that it holds for all balls $B(x,r)\subset \zu^d$.

Consider one such ball $B(x,r)$ such that $\mu(B(x,r))>0$.  Let us denote by $k$ the unique generation such that $B(x,r)$ intersects at least two basic cylinders  of level $k$ in the construction of the Cantor set $\F_\infty$ and only one of level $k-1$.

 Now let $\ell$ be the largest integer such that $B(x,r)$ intersects  only one cylinder $I$ of order ${n_{k-1}+ \ell m}$. Observe that with this definition one may have $\ell \in \{0,1,...,\ell_k-1\}$. The maximality of $\ell$ ensures us that $B(x,r)$ intersects at least two sub-basic cylinders of order $n_{k-1}+(\ell+1)m$. Thus the diameter of the ball $B(x,r)$ must be larger than the gap between these  basic cylinders.

As a result, by Lemma \ref{l4} (see also Remark \ref{remark}), one has
$$|B(x,r)| \geq \eta_m |I| \ \ \mbox{ and } \ \ \mu(B(x,r)) \leq \mu(I) \leq M |I| ^{s_\ep},$$
thus
$$\mu(B(x,r)) \leq  (M \eta_m^{-s_\ep}) |B(x,r)| ^{s_\ep}.$$
In conclusion, \eqref{eqfinal}  holds true for any ball $B(x,r)\subset \zu^d$ up to the modification of the constant $M$ into $M \eta_m^{-s_\ep}$.

\medskip

The mass distribution principle yields that $$
\dim_{\textsf{H}}\F_{\infty}\ge s_\ep.
$$
Since $\dim_{\textsf{H}} {R}_2(f) \geq \dim_{\textsf{H}}\F_{\infty}$ and  $\lim_{\ep\to 0} s_\ep = s(f)$, the conclusion follows, and Theorem \ref{t2} is proved.

\section{Applications}
\label{section3}


 Roughly speaking, the set $R(f)$ concerns the distribution of periodic points, so Theorem \ref{t0} can be applied to study some Diophantine problems. In particular we apply Theorem \ref{t0} to prove a result concerning the approximation of reals
by quadratic algebraic numbers with purely periodic continued fraction expansions; when apply Theorem \ref{t0} to the $3$-adic expansion on triadic Cantor set, we can answer (partially) a question posed by K. Mahler \cite{Mah}.

\subsection{Recurrence properties in $b$-adic expansion}
Let $b\ge 2$ be an integer and $T$ the $b$-adic expansion given by $Tx=bx- \lfloor bx\rfloor$ for $x\in [0,1]$. Then the system $([0,1], T)$ can be viewed as a conformal IFS with $$\S=\Big\{\phi_i(x)=\frac{x+i}{b}: 0\le i<b\Big\}.$$

Following from Boshernitzan's result \cite{Bo}, for almost all $x\in [0,1]$, $$
\liminf_{n\to\infty} n|T^nx-x|<\infty.
$$
If we take $f(x) =t \log b$ as potential in Theorem \ref{t0}, one gets the
following from Theorem \ref{t0}:
\begin{thm} For any $t\ge 0$, the Hausdorff dimension of the set $$
\Big\{x\in [0,1]: |T^nx-x|<b^{-tn}, \ {\text{ for infinitely many }}\ n\in \N\Big\},
$$ is $1/(1+t)$.\end{thm}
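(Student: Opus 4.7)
The plan is to apply Theorem \ref{t0} with the IFS $\S=\{\phi_i(x)=(x+i)/b:0\le i<b\}$ and the constant potential $f\equiv t\log b$, then verify that the two hypotheses (H1) and (H2) hold, and finally compute $s(f)$ explicitly from the pressure equation. The dynamics agree: $T$ restricted to $[0,1]\setminus \bigcup_n T^{-n}(\{b\text{-adic rationals}\})$ coincides with the map of the paper, and $|T'|\equiv b$ everywhere. The IFS is a finite conformal IFS (each $\phi_i$ is affine with derivative $1/b$, so the bounded distortion property holds trivially with $K=1$, and the OSC is satisfied by $U=(0,1)$; the cone condition is automatic in dimension $1$).

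First I would check (H1): since $f$ is constant, ${\rm Var}_n(f)=0$ for all $n$, and the tempered distortion property is immediate. Next, I would compute the pressure function. For any word $w\in\{0,\dots,b-1\}^n$, we have $|(T^n)'([w])|=b^n$ and $S_nf([w])=nt\log b$, hence
\begin{equation*}
\sum_{w\in \Lambda^n}\left(|(T^n)'([w])|^{-1}\cdot e^{-S_nf([w])}\right)^s
= b^n\cdot\bigl(b^{-n}\cdot b^{-nt}\bigr)^s
= b^{\,n(1-s(1+t))}.
\end{equation*}
Taking $\frac{1}{n}\log$ and letting $n\to\infty$ gives
$\P(T,-s(\log|T'|+f))=(1-s(1+t))\log b$,
which is non-positive exactly when $s\ge 1/(1+t)$. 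Therefore
\begin{equation*}
s(f)=\inf\{s\ge 0:\P(T,-s(\log|T'|+f))\le 0\}=\frac{1}{1+t}.
\end{equation*}

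Then I would verify (H2). For the $b$-adic IFS, a point of $[0,1]$ has multiple codings if and only if it is a $b$-adic rational $k/b^n$; hence $J\setminus J^\ast$ is countable and has Hausdorff dimension $0$. Since $t\ge 0$, we have $s(f)=1/(1+t)>0=\dim_{\textsf{H}}(J\setminus J^\ast)$, so (H2) holds. Applying Theorem \ref{t0} then gives $\dim_{\textsf{H}}R(f)=1/(1+t)$, and since the condition $|T^nx-x|<e^{-S_nf(x)}$ becomes exactly $|T^nx-x|<b^{-tn}$, the statement follows.

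The argument is essentially a direct specialization, so there is no real obstacle beyond the routine pressure computation; the only point worth care is ensuring that the set defined in the statement (on all of $[0,1]$) differs from $R(f)\subset J^\ast$ only on a dimension-zero set, which is precisely what (H2) guarantees here.
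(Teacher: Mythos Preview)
Your proposal is correct and follows essentially the same route as the paper: apply Theorem~\ref{t0} with the constant potential $f\equiv t\log b$ and read off $s(f)=1/(1+t)$ from the pressure. You are in fact more careful than the paper, which simply states the pressure formula without verifying (H1) or (H2); your computation $\P(T,-s(\log|T'|+f))=(1-s(1+t))\log b$ is the correct one (the paper's displayed value $1-s(1+t)$ drops the harmless factor $\log b$).
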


It is immediate since in this case $\P(T, -s(\log |T'|+f)) = 1-s(1+t)$.

Let us take another example: Choosing $b=2$ and the potential $f$ equal to $f(x) = t \log b {\bf 1\!\! 1}_{[0,1/2)}(x)$,
 we get the following ``exotic" result, where the approximation rate of a point $x$ depends on the frequency of zeros in its dyadic decomposition.
\begin{thm} For every $x \in \zu $ with unique dyadic decomposition $x=x_1x_2 ...$ with $x_n\in \{0,1\}$, let $\xi_n(x) = \#\{ 1\leq i\leq n: x_i=0\}$ be the number of zeros amongst the first $n$ digits of $x$.
For any $t\ge 0$, the Hausdorff dimension of the set $$
\Big\{x\in [0,1]: |T^nx-x|<b^{-t \phi_n(x)}  \ {\text{ for infinitely many }}\ n\in \N\Big\},
$$ is the unique solution  to the equation
$1+2^{ts} = 2^{s(t+1)}$.
\end{thm}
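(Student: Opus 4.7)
The plan is to derive this as a direct application of Theorem \ref{t0} to the doubling map IFS $\S=\{\phi_0(x)=x/2,\ \phi_1(x)=(x+1)/2\}$ on $[0,1]$, with the step function potential $f(x)=t\log 2\cdot \mathbf{1}_{[0,1/2)}(x)$. Writing $x=x_1x_2\cdots$ in base $2$, the $k$-th iterate $T^k x$ lies in $[0,1/2)$ precisely when $x_{k+1}=0$, so $S_n f(x)=t(\log 2)\,\xi_n(x)$ and hence $e^{-S_n f(x)}=2^{-t\xi_n(x)}$. Thus the set in the theorem coincides with $R(f)$ on $J^\ast$; the points with multiple codings are just the dyadic rationals, a countable set that cannot affect the Hausdorff dimension.

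Next, I will check hypotheses (H1) and (H2). For (H1), observe that $f$ is constant on every cylinder of order $\geq 1$ (it takes the value $t\log 2$ on $I_1(0)$ and $0$ on $I_1(1)$), so $\mathrm{Var}_n(f)=0$ for every $n\geq 1$; the tempered distortion property holds trivially, despite the discontinuity of $f$ at $1/2$ (which is harmless since $1/2\in J\setminus J^\ast$). For (H2), the set $J\setminus J^\ast$ consists of the dyadic rationals and has Hausdorff dimension $0$, so it suffices to verify that $s(f)>0$, which will follow from the pressure computation below.

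It then remains to compute the pressure and solve $\P(T,-s(\log|T'|+f))=0$. Since $|T'|\equiv 2$ and a word $w\in\{0,1\}^n$ contains $k$ zeros contributes $|(T^n)'([w])|^{-1}=2^{-n}$ and $e^{-S_nf([w])}=2^{-tk}$, the defining sum factors by the binomial theorem:
\begin{equation*}
\sum_{w\in\{0,1\}^n}\Big(|(T^n)'([w])|^{-1}e^{-S_nf([w])}\Big)^s = 2^{-ns}\sum_{k=0}^n\binom{n}{k}2^{-tsk}=\bigl(2^{-s}(1+2^{-ts})\bigr)^n.
\end{equation*}
Taking $\frac{1}{n}\log$ and passing to the limit gives $\P(T,-s(\log|T'|+f))=-s\log 2+\log(1+2^{-ts})$. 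Setting this equal to zero yields $2^s=1+2^{-ts}$, equivalently $1+2^{ts}=2^{s(t+1)}$, whose unique solution is the claimed dimension. Monotonicity of $s\mapsto 2^s-2^{-ts}$ shows the solution is strictly positive for every $t\geq 0$ (it equals $1$ when $t=0$), so (H2) is indeed satisfied. Theorem \ref{t0} then delivers the stated formula.

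The only mildly delicate point is the discontinuity of $f$ at $1/2$, but this is absorbed by the fact that $1/2$ has two codings and that $\mathrm{Var}_n(f)$ is measured on cylinders of unique codings, where $f$ is locally constant; no other genuine difficulty arises in the argument.
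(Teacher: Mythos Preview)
Your proof is correct and follows exactly the paper's approach: the paper simply records the choice $f=t\log 2\cdot\mathbf{1}_{[0,1/2)}$ and the one-line pressure computation $\P(T,-s(\log|T'|+f))=\log\bigl(2^{-s(t+1)}(1+2^{ts})\bigr)$, which you recover via the binomial expansion and then set to zero. One small imprecision worth noting: your claim that $\mathrm{Var}_n(f)=0$ for every $n\ge 1$ is not literally true, since the dyadic endpoint $1/2$ lies in $I_n(0,1,\ldots,1)$ with $f(1/2)=0\ne t\log 2$; what does hold is that $|S_nf(x)-S_nf(y)|\le t\log 2$ uniformly for $x,y$ in a common $n$-cylinder (a dyadic rational can hit $1/2$ under iteration at most once), and this bounded-variation estimate is all that the proof of Theorem~\ref{t0} actually uses via~(\ref{b1}) --- the paper itself does not comment on this point.
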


An immediate computation shows that $$\P(T, -s(\log |T'|+f)) = \log \Big(2^{-s(t+1)} (1+2^{ts})\Big).$$

\subsection{Recurrence properties in continued fraction system}

The system of continued fraction is given by $$
T0=0, \ Tx=1/x-\lfloor 1/x\rfloor, \ x\in [0,1),
$$
It is a classic conformal iterated function system generated by $$
\S=\Big\{\phi_i(x)=1/(i+x): i\in \N\Big\}.
$$
%
  Theorem \ref{t0} applies to this system.

\subsection{Approximation by purely periodic quadratic numbers}

Let $\widetilde{\mathbb{A}}_2$ denote the class of quadratic algebraic numbers in $[0,1]$ with purely periodic continued fractions. Instead of approximating reals by rationals, we consider the approximation of reals by elements in $\widetilde{\mathbb{A}}_2$. 

The elements in $\widetilde{\mathbb{A}}_2$  are closely related to the reduced rationals in the following sense:

(i). For each $x\in \widetilde{\mathbb{A}}_2$, let $x=[(a_1,\ldots,a_n)^{\infty}]$ be its continued fraction expansion. With $x$ is naturally associated a    reduced rational number $p_n/q_n=[a_1,\ldots, a_n]$.

(ii). For each irreducible rational $p/q$, let $p/q=[a_1,\ldots, a_n]$ with $a_n\ge 2$ or $p/q=[a_1,\ldots,a_{n}-1, 1]$ be the two continued fraction expansion of $p/q$. Then the rational $p/q$ determines two elements in $\widetilde{\mathbb{A}}_2$, namely
$$x_1=[(a_1,\ldots,a_n)^{\infty}], \ x_2=[(a_1,\ldots,a_n-1,1)^{\infty}].$$
We call $x_1,x_2$ {\em the elements in $\widetilde{\mathbb{A}}_2$  induced by $p/q$.}

\medskip

For each $q\in \N$, let $$
\mathcal{A}_q=\Big\{x\in \widetilde{\mathbb{A}}_2: x \ {\text{is induced by }} p/q \ {\text{for some}} \ p\in \N \ {\text{with}} \ (p,q)=1\Big\}.
$$
Obviously, $\sharp \mathcal{A}_q\le 2q$.

\medskip

For any $\tau\ge 0$, we introduce the set
$$
D(\tau)=\Big\{x\in [0,1]: d(x, \mathcal{A}_q)<q^{-2(\tau+1)},  \ {\text{ for infinitely many }}\ q\in \N\Big\},
 $$
of real numbers approximated at a given rate by purely quadratic numbers. If   $\mathcal{A}_q$ is replaced by the set $\{p/q: 0\le p\le q\}$, then  the set $D(\tau)$ is just the classic Jarn\'{i}k-Besicovitch set.

\begin{thm}
For any $\tau\ge 2$, $\dim_{\textsf{H}}D(\tau)=1/(\tau+1)$.
\end{thm}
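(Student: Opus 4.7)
The plan is to establish $\dim_{\textsf{H}} D(\tau) = 1/(\tau+1)$ by separate upper and lower bounds: the former via a natural covering, the latter as an application of Theorem \ref{t0} to the Gauss continued fraction IFS on $[0,1]$ generated by $\phi_i(x) = 1/(i+x)$, $i \in \N$. For the upper bound, I would simply observe that
$$D(\tau) = \bigcap_{Q \ge 1}\, \bigcup_{q \ge Q}\, \bigcup_{y \in \mathcal{A}_q} B\bigl(y, q^{-2(\tau+1)}\bigr),$$
and, using the bound $\#\mathcal{A}_q \le 2q$ recorded in the text, the $s$-dimensional sum $\sum_{q \ge Q} 2q \cdot \bigl(2 q^{-2(\tau+1)}\bigr)^s$ converges exactly when $s > 1/(\tau+1)$, giving $\dim_{\textsf{H}} D(\tau) \le 1/(\tau+1)$.

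For the lower bound, fix any $\tilde\tau > \tau$ and apply Theorem \ref{t0} with potential $f_{\tilde\tau}(x) = \tilde\tau \log |T'(x)|$. Hypothesis (H1) is immediate from the bounded distortion property of the Gauss IFS. For (H2) I would note that $J \setminus J^{\ast}$ is precisely the set of rationals in $[0,1]$ (the only points with two continued fraction expansions), hence countable, so $\dim_{\textsf{H}}(J \setminus J^{\ast}) = 0 < s(f_{\tilde\tau})$. The value $s(f_{\tilde\tau})$ is computed directly: since $-s(\log|T'| + f_{\tilde\tau}) = -s(1+\tilde\tau)\log|T'|$ and since for the Gauss map Bowen's formula yields $\P(T, -t\log|T'|) \le 0$ if and only if $t \ge 1$, one obtains $s(f_{\tilde\tau}) = 1/(1+\tilde\tau)$, and therefore $\dim_{\textsf{H}} R(f_{\tilde\tau}) = 1/(1+\tilde\tau)$ by Theorem \ref{t0}.

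The decisive step is then the inclusion $R(f_{\tilde\tau}) \subset D(\tau)$. Given $x = [a_1, a_2, \ldots] \in R(f_{\tilde\tau})$ and $n$ with $|T^n x - x| < |(T^n)'(x)|^{-\tilde\tau}$, consider the branch $\phi_n := \phi_{a_1} \circ \cdots \circ \phi_{a_n}$, whose unique fixed point is the purely periodic quadratic $y_n := [(a_1, \ldots, a_n)^{\infty}] \in \mathcal{A}_{q_n}$. Since $x = \phi_n(T^n x)$ and $y_n = \phi_n(y_n)$, and the bounded distortion gives $\sup|\phi_n'| \le K|(T^n)'(x)|^{-1} \asymp K q_n^{-2}$, the standard fixed-point contraction argument yields
$$|x - y_n| \le \frac{\sup|\phi_n'|}{1 - \sup|\phi_n'|}\, |T^n x - x| \le C\, q_n^{-2(\tilde\tau+1)}$$
for an absolute constant $C$. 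Because $\tilde\tau > \tau$, for all $n$ large enough $C q_n^{-2(\tilde\tau+1)} < q_n^{-2(\tau+1)}$, so $d(x,\mathcal{A}_{q_n}) < q_n^{-2(\tau+1)}$ for infinitely many $n$, i.e. $x \in D(\tau)$. Hence $\dim_{\textsf{H}} D(\tau) \ge 1/(1+\tilde\tau)$, and letting $\tilde\tau \to \tau^+$ yields the matching lower bound $1/(\tau+1)$. The main subtle point throughout is this conversion estimate from the recurrence inequality to the approximation inequality: one must exploit the conformal contraction of $\phi_n$ to turn a returning orbit into a nearby fixed point of $\mathcal{A}_{q_n}$, and the small slack $\tilde\tau > \tau$ is precisely what absorbs the distortion constant $C$ to produce the clean rate $q^{-2(\tau+1)}$ required by the definition of $D(\tau)$.
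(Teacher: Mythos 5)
Your proof is correct and follows essentially the same route as the paper: the upper bound via the natural cover of $D(\tau)$ using $\#\mathcal{A}_q\le 2q$, and the lower bound by applying Theorem~\ref{t0} with the potential $(\tau+\ep)\log|T'|$ (your $\tilde\tau=\tau+\ep$), identifying $s(f)=1/(1+\tilde\tau)$, and showing $R(f)\subset D(\tau)$. Your fixed-point contraction estimate for $\phi_n$ is just the inverse formulation of the paper's reverse-triangle-inequality estimate for the expansion $T^n$, so the two arguments coincide.
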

\begin{proof}  The upper bound of $\dim_{\textsf{H}}D(\tau)$ is obtained using the natural covering system of $D(\tau)$ and also noticing that
$\sharp \mathcal{A}_q\le 2q$.

\smallskip

For the lower bound, fix $\ep>0$ and consider   $f(x)=(\tau+\ep)\log |T'(x)|$ for any $\ep>0$ and the associated  set $R(f)$. Notice that $$
R(f)\subset\bigcap_{N=1}^{\infty}\bigcup_{n=N}^{\infty}\bigcup_{(a_1,\ldots,a_n)\in \N^n}J_n(a_1,\ldots,a_n),$$ where $$J_n(a_1,\ldots,a_n)=\Big\{x\in I_n(a_1,\ldots,a_n): |T^nx-x|<1/2 q_n(x)^{-2\tau}\Big\}.
$$
For each $(a_1,\ldots,a_n)\in \N^n$, let $x_0=[(a_1,\ldots,a_n)^{\infty}]$. Then $x_0\in\mathcal{A}_{q_n(x_0)}$.  For each $x\in J_n(a_1,\ldots,a_n)$, $q_n(x)=q_n(x_0)$. Then by the triangle inequality, we have \begin{align*}
|T^nx-x|&\ge |T^nx-T^nx_0|-|T^nx_0-x_0|-|x_0-x|\\
&=|(T^n)'(\xi)|\cdot |x-x_0|-|x-x_0|\\&\ge \frac{1}{2}q^2_n(x) |x-x_0|.
\end{align*} Thus $$
d(x, \mathcal{A}_{q_n(x)})\le q_n(x)^{-2(1+\tau)}.
$$ This shows that $R(f)\subset D(\tau)$.
\end{proof}

One can compare the dimension of $D(\tau)$ with that of the set of $\tau$-well approximable points by rationals \cite{Ja} and that of the set of $\tau$-well approximable points by quadratic algebraic numbers \cite{BaS1}.

\subsection{Recurrence properties in triadic Cantor set and a Mahler's question}

Let $C$ be the triadic Cantor set which $C$ can be viewed as the attractor of the finite iterated function system $$
\phi_1(x)=\frac{x}{3}, \ \ \phi_2(x)=\frac{2+x}{3}, \ x\in [0,1].
$$
Define the corresponding map $T:C\to C$ as $
Tx=3x \ {\text{mod}\ 1}.
$

Following from Boshernitzan's result \cite{Bo}, for almost all $x\in C$ with respect to the Cantor measure, $$
\liminf_{n\to\infty} n^{H }|T^nx-x|<\infty, \ \ \ \mbox{ where $H=\frac{\log 2}{\log 3}$}.
$$
While
following from Theorem \ref{t0}, we have
\begin{thm}\label{a1}
For any $t>0$, the Hausdorff dimension of the set
 $$
 {R}(f)=\Big\{x\in C: |T^nx-x|<3^{-tn}, \  \ {\text{ for infinitely many }}\ n\in \N\Big\},
$$ is $H/(t+1)$.
\end{thm}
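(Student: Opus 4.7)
The plan is to apply Theorem \ref{t0} directly, with all ingredients being essentially trivial computations thanks to the regularity of the triadic Cantor IFS.

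First, I would identify the setup. The IFS $\S = \{\phi_1, \phi_2\}$ with $\phi_1(x) = x/3$ and $\phi_2(x) = (2+x)/3$ is a finite conformal IFS on $[0,1]$ satisfying the strong separation condition, since $\phi_1([0,1]) = [0,1/3]$ and $\phi_2([0,1]) = [2/3,1]$ are disjoint. In particular, every point of $C$ has a unique symbolic coding in $\{1,2\}^{\mathbb N}$, so $J = J^* = C$ and $\dim_{\textsf H}(J \setminus J^*) = 0$ (in fact the set is empty). The condition $|T^n x - x| < 3^{-tn}$ is exactly $|T^n x - x| < e^{-S_n f(x)}$ for the constant potential $f \equiv t \log 3$, which is positive and trivially satisfies the tempered distortion property (${\rm Var}_n(f) = 0$ for all $n$). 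Moreover, since $|T'(x)| = 3$ everywhere, $\log|T'|$ is also constant.

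Next I would compute the pressure. With $\psi_s = -s(\log|T'| + f) = -s(1+t)\log 3$ constant, formula \eqref{f6} gives
\begin{equation*}
\P(T, \psi_s) = \lim_{n\to\infty} \frac{1}{n} \log \sum_{w \in \{1,2\}^n} 3^{-sn(1+t)} = \log 2 - s(1+t)\log 3.
\end{equation*}
Setting $\P(T, \psi_s) = 0$ yields $s(f) = \dfrac{\log 2}{(1+t)\log 3} = \dfrac{H}{t+1}$.

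Finally I would verify the hypotheses of Theorem \ref{t0}. Hypothesis (H1) is immediate from the constancy of $f$. For (H2), since $J \setminus J^* = \emptyset$, we have $\dim_{\textsf H}(J \setminus J^*) = 0 < H/(t+1) = s(f)$ for every $t > 0$. Theorem \ref{t0} then yields $\dim_{\textsf H} R(f) = H/(t+1)$, which is exactly the claim.

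There is essentially no obstacle here: the only thing to check is that the finite IFS framework of Theorem \ref{t0} truly applies, and the strong separation together with the constancy of the potentials turn the pressure equation into a one-line linear equation. The only conceptual remark worth making is that the approximation rate $3^{-tn}$ fits naturally into the $e^{-S_n f}$ framework precisely because the Lyapunov exponent $\log|T'| = \log 3$ is constant, so that both the geometric scale $|I_n(w)| \asymp 3^{-n}$ and the target shrinkage $3^{-tn}$ are governed by the same constant.
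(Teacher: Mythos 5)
Your proposal is correct and follows exactly the approach the paper intends: the authors state Theorem \ref{a1} as an immediate consequence of Theorem \ref{t0}, and your computation of the pressure $\P(T,\psi_s)=\log 2 - s(1+t)\log 3$ together with the verification of (H1) and (H2) (the latter trivial since $J\setminus J^\ast=\emptyset$ for the strongly separated triadic Cantor IFS) is precisely the intended argument.
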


This result can also be applied to answer a question posed by
K. Mahler \cite{Mah}: whether there exist well approximable points, except Liouville numbers, on triadic Cantor set? This was affirmatively answered by Levesley, Salp \& Velani \cite{LeSV}, Bugeaud \cite{Bu1} and independently by Barral \& Seuret \cite{BS1,BS2}. As far as the Hausdorff dimension of the set of the well approximable points in
 triadic Cantor sets is concerned, we can obtain the same result as in \cite{LeSV}:
\begin{cor}\label{c1}
The set of well approximable points in triadic Cantor set is of Hausdorff dimension at least $\frac{\log 2}{2\log 3}$.
\end{cor}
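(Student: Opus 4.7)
The plan is to deduce Corollary \ref{c1} directly from Theorem \ref{a1} by exploiting that every periodic orbit of $T:C\to C$ lies on a rational. Indeed, the unique fixed point of $T^n$ in the cylinder $I_n(a_1,\ldots,a_n)$ is the purely periodic ternary expansion $[0.\overline{a_1\cdots a_n}]_3=p_n/q_n$ with denominator $q_n=3^n-1$. So a point that comes back close to itself under $T^n$ is automatically close to a rational of controlled denominator, which translates the dynamical recurrence problem into a Diophantine one.

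Given $\tau>2$, I would set $t=\tau-1>1$ and apply Theorem \ref{a1} to $f\equiv t\log 3$. The set $R(f)=\{x\in C:|T^nx-x|<3^{-tn}\text{ for i.m. } n\}$ has Hausdorff dimension $H/(t+1)=H/\tau$, where $H=\log 2/\log 3$. The heart of the argument is then to verify the inclusion $R(f)\cap\{x\text{ irrational}\}\subset W(\tau')\cap C$ for every $\tau'<\tau$, where $W(\tau')$ denotes the classical set of $\tau'$-well approximable reals.

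To establish this, one observes that on each cylinder $I_n(a_1,\ldots,a_n)$ the iterate $T^n$ is affine of slope $3^n$ and fixes $y_n=p_n/q_n$, so the identity $(3^n-1)(x-y_n)=T^nx-x$ converts the recurrence bound $|T^nx-x|<3^{-tn}$ into
\[
|x-p_n/q_n|<\frac{3^{-tn}}{3^n-1}<q_n^{-\tau'}
\]
for $n$ large and any $\tau'<\tau$. The rationals $y_n$ converge to $x$, so when $x$ is irrational (which excludes only a countable subset of $C$) infinitely many of the $y_n$ are distinct, and reducing the fractions to lowest terms only strengthens the inequality. Thus $x\in W(\tau')\cap C$, giving $\dim_{\textsf{H}}(W(\tau')\cap C)\ge H/\tau$.

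Since the set of well approximable points in $C$ contains $W(\tau')\cap C$ for every $\tau'>2$, its Hausdorff dimension is at least $\sup_{\tau'>2}H/\tau'=H/2=\log 2/(2\log 3)$, as required. There is no real obstacle to this argument: the entire substance is concentrated in Theorem \ref{a1}, and the passage from dynamical recurrence to Diophantine approximation is a routine computation using the affine structure of $T^n$ on cylinders, entirely parallel to the proof of the theorem on $D(\tau)$ given earlier in the section.
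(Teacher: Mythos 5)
Your proposal is correct and follows essentially the same route as the paper: rewrite the recurrence condition $|T^nx-x|<3^{-tn}$ as a rational approximation condition with denominators $3^n-1$ (the fixed points of $T^n$), apply Theorem \ref{a1} to obtain the dimension $H/(t+1)$, and let $t\to 1^+$. The paper phrases the inclusion via the norm $\|(3^n-1)x\|<3^{-tn}$ rather than through the fixed point $y_n$ explicitly, and takes the exponent $t+1$ directly instead of your slightly looser $\tau'<\tau$, but these are cosmetic differences; the substance is identical.
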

\begin{proof}  For any $t>0$, the set $R(f)$ in Theorem \ref{a1} can be rewritten as
$$R(f) =
\Big\{x\in C: \|(3^n-1)x\|<3^{-tn}, \ {\text{i.o.}}\ n\in \N\Big\}.
$$

When $t>1$,  $R(f)$ is a subset of $(t+1)$-well approximable points in $C$, and Theorem \ref{a1} yields  $\dim_{\textsf{H}} R(f) = H/(t+1)$. Letting $t$ tend to 1 yields the result.
\end{proof}

There is a small difference with   \cite{LeSV,Bu1,BS1,BS2}, where they restrict their attention to approximate the points in $C$ by rationals $p/q$ with $q\in \{3^n: n\in \N\}$, Corollary \ref{c1}
indicates that one can also use  rationals with periodic 3-adic expansion $$
\Big\{\frac{i_1}{3}+\ldots+\frac{i_n}{3^n}+\frac{i_1}{3^{n+1}}+\ldots+\frac{i_n}{3^{2n}}+\ldots: i_k=0 \ {\text{or}}\ 2, \ {\text{for all}}\ 1\le k\le n, n\in \N\Big\}
$$ to approximate the points in $C$ and the same dimensional result as in \cite{LeSV,Bu1} holds.


\medskip
\subsection*{Acknowledgements}

The first author would like to thank the members of the Mathematics Department of Huazhong University for their kind hospitality during his visit in November 2012.

 { }
\end{document}